\newtheorem{thm}{Theorem}[section]
\newtheorem{lem}[thm]{Lemma}
\theoremstyle{definition}
\theoremstyle{remark}
\numberwithin{equation}{section}
\begin{document}

\begin{frontmatter}

%% Title, authors and addresses

%% use the tnoteref command within \title for footnotes;
%% use the tnotetext command for the associated footnote;
%% use the fnref command within \author or \address for footnotes;
%% use the fntext command for the associated footnote;
%% use the corref command within \author for corresponding author footnotes;
%% use the cortext command for the associated footnote;
%% use the ead command for the email address,
%% and the form \ead[url] for the home page:
%%
%% \title{Title\tnoteref{label1}}
%% \tnotetext[label1]{}
%% \author{Name\corref{cor1}\fnref{label2}}
%% \ead{email address}
%% \ead[url]{home page}
%% \fntext[label2]{}
%% \cortext[cor1]{}
%% \address{Address\fnref{label3}}
%% \fntext[label3]{}
\title{Stochastic Convective Wave Equation in Two Space Dimension}

%% use optional labels to link authors explicitly to addresses:
%% \author[label1,label2]{<author name>}
%% \address[label1]{<address>}
%% \address[label2]{<address>}

\author[a]{Sang-Hyeon Park}
\ead[a]{laniean@nims.re.kr}

\author[a]{Imbo Sim\corref{cor1}}
\ead[b]{imbosim@nims.re.kr}

\address[a]{National Institute for Mathematical Sciences, Jeonmin-dong 463-1, Yuseong-gu, 305-811 Daejeon , Republic of Korea}
%\address[b]{National Institute for Mathematical Sciences, Jeonmin-dong 463-1, Yuseong-gu, 305-811 Daejeon , Republic of Korea}

\cortext[cor1]{Corresponding author.}

%% References with bibTeX database:

\begin{abstract}
We study the convective wave equation in two space dimension driven by spatially homogeneous Gaussian noise. The existence of the real-valued solution is proved by providing a necessary and sufficient condition of Gaussian noise source. Our approach is based on the mild solution of the convective wave equation which is constructed by Walsh's theory of martingale measures. H\"older continuity of the solution is proved by using Green's function and  Kolmogorov continuity theorem.
\end{abstract}

\begin{keyword}
Convective wave equation, Gaussian noise, SPDE, H\"older continuity
\end{keyword}

\footnote{{\bf Subject Classifications.}
Primary : 60H15, Secondary : 35R60}

\end{frontmatter}

\section{Introduction}\label{intro}
The purpose of this work is to study the propagation of acoustic waves in the presence of a uniform flow, driven by spatially homogeneous Gaussian noise source. In the absence of the mean flow, the acoustic problem can be constructed by the classical wave equation. The wave equation with a Gaussian noise has been studied by many articles, for example \cite{D1999, DF1998, MS1999, W1986}. Especially, Dalang and Frangos proved H\"older continuity  of the solution to stochastic wave equation in two spatial dimensions by presenting a necessary and sufficient condition for a real-valued stochastic process (\cite{DF1998}).

The linearized Euler equations model aeroacoustic problems in the presence of a uniform flow. The equations support acoustic waves, which propagates with the speed of sound relative to the mean flow, and vorticity and entropy waves, which travels with the mean flow. The entropy waves can be ignored in an inviscid, homogeneous fluid which does not conduct heat. For such a mean flow, the linearized Euler equations reduce into a convected wave equation for the pressure field. The presence of a mean flow makes the mathematical treatment of the problem much more difficult, mainly due to the acoustic waves whose phase and group velocities have opposite signs \cite{BGH2010, DJ2006, HHT2003}.

In this paper, we will study the convective wave equation in two space dimension. Unfortunately, a space-time white noise $\dot{W}(t,x)$ is not adapted to wave problem with two dimensions (refer to \cite{DF1998} for details). In this reason, Authors of \cite{DF1998} studied stochastic wave equation under a spatially homogeneous Gaussian noise. Similarly, we consider the convective wave equation driven by Gaussian noise $F$ as follows:
\begin{equation}\begin{split}\label{spde1}
&\left(\frac{\partial }{\partial t}+M\cdot \nabla\right)^2 u-\Delta u = \dot{F}(t,x),\\
&u(0,x)=0, \;\;\;\;\; t>0,\;x\in\mathds{R}^2,\\
&\frac{\partial u}{\partial t}=0,
\end{split}\end{equation}
where $\dot{F}$ is the formal derivative of Gaussian random field $F$ whose covariance function is given by $f(\|x\|)$ and $M=(M_1, M_2)$ is a Mach vector.

We assume that $f$ is a continuous function, $f:\mathds{R}_{+}\rightarrow \mathds{R}_{+}$, which holds the condition, $\int_{0^{+}}rf(r)dr<\infty$ (refer to \cite{DF1998} for details). Since the Laplacian operator $\Delta$ and $f(\|x\|)$ is rotational invariant, i.e. for any orthogonormal matrix $\Psi$, $f(\|\Psi x\|)=f(\|x\|)$, we consider the subsonic case, $M=(M_1,0)$ and $0\leq M_1<1$. Note that the problem (\ref{spde1}) is reduced to the wave equation considered by \cite{DF1998} ($M_1=0$). Here, we use a notation $M_1=m$ for our convenience. The Green function of (\ref{spde1}) in the case of $\dot{F}(t,x)=\delta(t)\delta(x)$, where $\delta(\cdot)$ is the Dirac delta function is given by
\begin{equation}\begin{split}\label{GF}
G(t,x,m):=\frac{1_{\{t-(\rho(x)-\frac{mx_1}{1-m^2})\geq0\}}}{2\pi \sqrt{1-m^2}\sqrt{(t+\frac{mx_1}{1-m^2})^2-\rho^2(x)}},
\end{split}\end{equation}
where $\rho(x):=\sqrt{\frac{x^2_1}{(1-m^2)^2}+\frac{x^2_2}{(1-m^2)}}$.

This paper organized as follows. First, we prove the existence of the real-valued solution by providing necessary and sufficient condition of a covariance function $f$ in Section 2. We study H\"older continuity of the solution in Section 3.

The result of this paper can be extended to the nonlinear case ($\sigma(u)\dot{F}(t,x)$ instead of $\dot{F}(t,x)$). Since a nonlinear case is verified by Picard iteration scheme and Gronwall's lemma if $|\sigma(u)|\leq K(1+|u||)$ (i.e. $\sigma$ is a globally Lipschitz function), we omit the nonlinear case. In this article, all positive real constants are denoted by $C$ or $C_i$, $i=1,2,\cdots$ in this paper.

\section{Stochastic Convected Wave Equation}
According to \cite{DF1998, W1986}, the model problem (\ref{spde1}) has a distribution-valued solution. Let $\mathds{D}(\mathds{R}^3)$ be the topological vector space of function $\phi$ in $\mathcal{C}^{\infty}_0(\mathds{R}^3)$. In $\mathds{D}(\mathds{R}^3)$, the convergence $\phi_n\rightarrow \phi$ defined by as follows:\\
1. there is a compact subset $K$ of $\mathds{R}^3$ such that supp$(\phi_n -\phi)\subset K$, for all $n$\\
2. $\lim_{n\rightarrow\infty}D^{\alpha}\phi_n=D^{\alpha}\phi$ unifomly on $K$ for each multiindex $\alpha$.\\

Let $F=F(\phi), \phi\in\mathds{D}(\mathds{R}^3)$ be a $L^2$-valued mean zero Gaussian process with covariance functional,
\begin{equation*}
E[F(\phi_1)F(\phi_2)]=\int_{\mathds{R}_+\times\mathds{R}^2}\phi_1(t,x)f(\|x-y\|)\phi_2(t,y)\;dxdyds.
\end{equation*}
We formally write this as a form $E[\dot{F}(t,x)\dot{F}(s,y)]=\delta(t-s)f(\|x-y\|)$. According to \cite{DF1998}, F has a $\mathds{D}^{\prime}(\mathds{R}^3)$ valued version. We formally define a martingale measure, $F((0,t]\times A):=F(1_{(0,t]\times A}(s,x))$, where A is a element of the Borel sigma-algebra $B(\mathds{R}^2)$. Then there exists the solution $u$ as a distribution with support in $\mathds{R}_+\times\mathds{R}^2$ such that for all $\phi\in\mathds{D}(\mathds{R}^3)$,
\begin{equation*}
\langle u,\left(\frac{\partial }{\partial t}+M\cdot \nabla\right)^2 \phi-\Delta \phi \rangle= \dot{F}(\phi),
\end{equation*}
where $\langle u,\phi\rangle := \int_{\mathds{R}_+\times\mathds{R}^2}u(t,x)\phi(t,x)\;dxdt$.

Since $\mathds{D}^{\prime}(\mathds{R}^3)$ is too large class, Dalang and Fragos in \cite{DF1998} studied a real-valued solution of classical wave equation (m=0 case) by worthy martingale measures as a form, $\int_{(0,t]\times\mathds{R}^2}G(t-s,x-y,0)\;dF(s,y)$ (refer to \cite{DF1998, W1986} for details). According to \cite{DF1998}, the previous stochastic integration is well-defined and square integrable i.e. $E[|\cdot|^2]<\infty$.

By applying the similar argument to \cite{DF1998},  we will study a solution of (\ref{spde1}) as a real-valued process in the class of $\{X(t,x)|\; E[X(t,x)^2]<\infty\}$ as follows.
\begin{thm}\label{th1}
Let $u$ be the distribution-valued solution of (\ref{spde1}). Then there exists a jointly measurable process, $X(t,x)=\int_{(0,t]\times\mathds{R}^2}G(t-s,x-y,m)\;dF(s,y)$ (refer to \cite{DF1998, W1986} for details), which is square integrable such that
\begin{equation*}
\langle u,\phi\rangle = \int_{\mathds{R}_+\times\mathds{R}^2}X(t,x)\;\phi(t,x)\;dxdt\;\;\;\;a.s., \;\;\;\text{for all $\phi \in\mathds{D}(\mathds{R}^3)$},
\end{equation*}
if and only if
\begin{equation}\label{12073}
\int_{0^{+}}r\ln{\frac{1}{r}}f(r)\;dr<\infty.
\end{equation}
\end{thm}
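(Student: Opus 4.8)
The plan is to turn the statement into a finiteness criterion for the second moment of the Walsh integral and then to match that criterion with the integral condition on $f$. By Walsh's theory the process $X(t,x)=\int_{(0,t]\times\mathds{R}^2}G(t-s,x-y,m)\,dF(s,y)$ is a well-defined, square-integrable process precisely when $E[X(t,x)^2]<\infty$ for every $t>0$. Using the (formal) covariance $E[\dot F(t,x)\dot F(s,y)]=\delta(t-s)f(\|x-y\|)$ and the substitutions $s\mapsto t-s$, $y\mapsto x-y$, $z\mapsto x-z$, this second moment becomes
\[
E[X(t,x)^2]=\int_0^t\!\int_{\mathds{R}^2}\!\int_{\mathds{R}^2}G(s,y,m)\,f(\|y-z\|)\,G(s,z,m)\,dy\,dz\,ds,
\]
which no longer depends on $x$. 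Thus both necessity and sufficiency will follow once I characterise, in terms of $f$, exactly when this quantity is finite.

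The cleanest route is to pass to the spectral side. Let $\mu$ be the nonnegative tempered measure (the spectral measure) determined by $\int\!\int\varphi(y)f(\|y-z\|)\psi(z)\,dy\,dz=\int_{\mathds{R}^2}\mathcal{F}\varphi(\xi)\,\overline{\mathcal{F}\psi(\xi)}\,\mu(d\xi)$; its existence and the admissibility of taking $\varphi=\psi=G(s,\cdot,m)$ are secured by the hypothesis $\int_{0^{+}}rf(r)\,dr<\infty$. Then $E[X(t,x)^2]=\int_0^t\int_{\mathds{R}^2}\bigl|\mathcal{F}G(s,\cdot,m)(\xi)\bigr|^2\,\mu(d\xi)\,ds$. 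To compute the spatial transform I would apply $\mathcal{F}$ to the defining equation $(\partial_t+m\partial_{x_1})^2 G-\Delta G=\delta(t)\delta(x)$, obtaining $(\partial_t+im\xi_1)^2\widehat G+\|\xi\|^2\widehat G=\delta(t)$; the substitution $\widehat G=e^{-im\xi_1 t}h(t)$ removes the drift and leaves $\ddot h+\|\xi\|^2 h=\delta(t)$, so that
\[
\mathcal{F}G(s,\cdot,m)(\xi)=e^{-im s\xi_1}\,\frac{\sin(s\|\xi\|)}{\|\xi\|}\,\mathbf{1}_{\{s\ge 0\}}.
\]
The decisive observation is that $\bigl|\mathcal{F}G(s,\cdot,m)(\xi)\bigr|^2=\sin^2(s\|\xi\|)/\|\xi\|^2$ is independent of $m$: the convective drift contributes only a unimodular factor. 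Hence $E[X(t,x)^2]$ coincides with the second moment of the classical wave case $m=0$ treated by Dalang and Frangos, and the whole problem reduces to theirs.

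It then remains to show that finiteness of $\int_0^t\int_{\mathds{R}^2}\sin^2(s\|\xi\|)\|\xi\|^{-2}\,\mu(d\xi)\,ds$ for all $t$ is equivalent to the stated condition, and this is the main obstacle. Carrying out the $s$-integration gives $\int_0^t \sin^2(s\rho)\rho^{-2}\,ds=\tfrac{t}{2\rho^2}-\tfrac{\sin(2t\rho)}{4\rho^3}$ with $\rho=\|\xi\|$, a nonnegative quantity bounded above and below by constant multiples of $(1+\|\xi\|^2)^{-1}$; so finiteness is equivalent to $\int_{\mathds{R}^2}(1+\|\xi\|^2)^{-1}\,\mu(d\xi)<\infty$. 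I would then use that the Fourier transform of $(1+\|\xi\|^2)^{-1}$ in two dimensions is $\tfrac{1}{2\pi}K_0(\|x\|)$, with $K_0$ the modified Bessel function, to rewrite the spectral integral, up to a positive constant, as $\int_0^\infty r\,K_0(r)\,f(r)\,dr$. Since $K_0$ decays exponentially at infinity while $K_0(r)\sim\ln(1/r)$ as $r\to0^{+}$, and since $\int_{0^{+}}rf(r)\,dr<\infty$ together with continuity of $f$ excludes any divergence away from the origin, this integral is finite exactly when $\int_{0^{+}}r\ln(1/r)f(r)\,dr<\infty$, yielding both directions.

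Alternatively, one can argue entirely in physical space with the explicit Green's function $(\ref{GF})$: the linear change of variables $\tilde y=\bigl(y_1/(1-m^2),\,y_2/\sqrt{1-m^2}\,\bigr)$ turns the support $\{\,s+m\tilde y_1\ge\|\tilde y\|\,\}$ of $G(s,\cdot,m)$ into an ellipse of size $O(s)$ centred at $(ms/(1-m^2),0)$, a controlled perturbation of the disc of radius $s$ occurring for $m=0$. Estimating the inner double integral over this ellipse and integrating in $s$ again produces the factor $\ln(1/r)$; here the obstacle is the bookkeeping of the shifted singular support rather than any new analytic phenomenon, which is why I would favour the spectral argument above.
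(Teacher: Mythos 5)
Your proposal is correct, but it takes a genuinely different route from the paper. The paper never leaves physical space: it manipulates the explicit Green's function (\ref{GF}) directly, splits the double space integral into the regions $D_1,D_2,D_3$ according to the signs of $y_1-z_1$ and $\rho(y)-\rho(z)$, controls the drift shifts $\tfrac{m(y_1-z_1)}{1-m^2}$ via the comparison Lemma \ref{lem0}, and then uses the domain-deformation Lemma \ref{lem1} for sufficiency and the lower-bound Lemma \ref{lem3} for necessity; all bounds carry constants depending on $m$. Your argument instead rests on the observation that the convection is a pure translation: $G(t,x,m)=G_0(t,x_1-mt,x_2)$ with $G_0$ the classical wave kernel (one checks directly that $(1-m^2)\bigl[(t+\tfrac{mx_1}{1-m^2})^2-\rho^2(x)\bigr]=t^2-(x_1-mt)^2-x_2^2$), so in Fourier variables the drift contributes only the unimodular factor $e^{-imt\xi_1}$ and $|\mathcal{F}G(s,\cdot,m)(\xi)|^2=\sin^2(s\|\xi\|)/\|\xi\|^2$ is independent of $m$. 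Equivalently---and even more simply, avoiding Fourier altogether---the simultaneous substitution $y\mapsto y+ms(1,0)$, $z\mapsto z+ms(1,0)$ in the inner double integral leaves $f(\|y-z\|)$ and Lebesgue measure invariant, so $E[|X(t,x)|^2]$ is \emph{exactly} the Dalang--Frangos second moment for $m=0$, and the theorem reduces to their Theorem 1 (or to Dalang's spectral condition $\int(1+\|\xi\|^2)^{-1}\mu(d\xi)<\infty$ together with the $K_0$-kernel equivalence you describe). What your approach buys is brevity and a stronger conclusion: exact $m$-independence of the second moment, where the paper only obtains two-sided bounds with $m$-dependent constants. What the paper's heavier machinery buys is reusability: in Section 3 the increments $G(s,y,m)-G(s+h,y,m)$ and $G(s,y,m)-G(s,x+y,m)$ do \emph{not} reduce to the $m=0$ case, because the translation by $ms(1,0)$ is $s$-dependent and does not commute with time or space increments, so the physical-space lemmas are what actually carry the H\"older estimates of Theorem \ref{th2}. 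Two points you should still make explicit to match the paper's level of rigor: (i) passing from the statement about $\langle u,\phi\rangle$ to the criterion $E[|X(t,x)|^2]<\infty$ plus joint measurability requires the $L^2$-continuity of $X$, which the paper itself defers to Section 3; (ii) inserting the non-smooth choice $\varphi=\psi=G(s,\cdot,m)$ into the spectral identity requires the Bochner--Schwartz representation of the positive-definite $f$ and an approximation argument, as in Dalang (1999)---positive definiteness, not the hypothesis $\int_{0^+}rf(r)\,dr<\infty$, is what guarantees the spectral measure exists.
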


{\bf Remark 1.} The function $f(\|x\|)=|x|^{-\alpha}$, $0<\alpha<2$ is usually applied to a Gaussian noise (for example, \cite{DM2010}). Clearly, this function satisfies (\ref{12073}). Therefore, the solution $u$ is not a distributed-valued but a real-valued stochastic process.\\

The proof of Theorem \ref{th1} needs following three lemmas.\\

\begin{lem}\label{lem0}
Let constants $a,b$, and C be positive. Suppose there exists a positive constant $\epsilon$ such that $0<c+\epsilon<a<b$. Then there exists a constant $C>0$ such that
\begin{equation}\begin{split}\label{lemin0}
\int^{b}_{a-\epsilon}&(s^2-c^2)^{-1/2}\left((s+\epsilon)^2-a^2\right)^{-1/2}ds\leq C \int^{b+\epsilon}_{a}(s^2-\tilde{c}^2)^{-1/2}(s^2-a^2)^{-1/2}ds,
\end{split}\end{equation}
where $\tilde{c}=c+\epsilon$.
\end{lem}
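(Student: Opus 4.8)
The plan is to reduce both sides to integrals over the \emph{same} interval and then compare integrands pointwise. First I would apply the translation $u = s + \epsilon$ to the left-hand integral. This carries the limits $s = a-\epsilon$ and $s = b$ to $u = a$ and $u = b+\epsilon$, matching the limits on the right, and it converts the second factor $\bigl((s+\epsilon)^2 - a^2\bigr)^{-1/2}$ into $(u^2 - a^2)^{-1/2}$, which is precisely the singular factor appearing on the right-hand side. Hence the left-hand side equals
\[
\int_a^{b+\epsilon}\bigl((u-\epsilon)^2 - c^2\bigr)^{-1/2}\,(u^2-a^2)^{-1/2}\,du.
\]

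Both integrals now share the common factor $(u^2 - a^2)^{-1/2}$, whose lower-endpoint singularity is integrable (of order $(u-a)^{-1/2}$), so it suffices to establish the pointwise inequality
\[
\bigl((u-\epsilon)^2 - c^2\bigr)^{-1/2} \leq C\,(u^2 - \tilde{c}^2)^{-1/2}, \qquad u\in[a,\,b+\epsilon],
\]
and then integrate against the nonnegative weight $(u^2-a^2)^{-1/2}\,du$. Inverting and squaring, this reduces to bounding the ratio $\frac{u^2 - \tilde{c}^2}{(u-\epsilon)^2 - c^2}$ uniformly on the compact interval $[a,\,b+\epsilon]$.

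To control this ratio I would bound the numerator above by its maximum and the denominator below by its minimum over $[a,\,b+\epsilon]$. The numerator $u^2 - \tilde{c}^2$ is increasing, hence at most $(b+\epsilon)^2 - \tilde{c}^2$. The denominator $(u-\epsilon)^2 - c^2$ is likewise increasing in $u$ and attains its minimum at $u = a$, where it equals $(a-\epsilon)^2 - c^2$; this is strictly positive exactly because the hypothesis $c + \epsilon < a$ gives $a - \epsilon > c > 0$. One may therefore take $C = \bigl((b+\epsilon)^2 - \tilde{c}^2\bigr)^{1/2}/\bigl((a-\epsilon)^2 - c^2\bigr)^{1/2}$, which is finite and independent of $u$.

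The step I expect to require the most care is not analytic but bookkeeping: verifying that after the translation every radicand remains nonnegative on the relevant interval, and in particular that the inner factor $(u-\epsilon)^2 - c^2$ is bounded away from zero. This is exactly where the gap condition $0 < c + \epsilon < a$ is indispensable; without it the denominator could touch zero on $[a,\,b+\epsilon]$ and no finite constant $C$ would exist.
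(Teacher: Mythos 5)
Your argument coincides with the paper's for its first half: the paper makes the same substitution $t=s+\epsilon$ and likewise reduces the lemma to a pointwise comparison of $\bigl((u-\epsilon)^2-c^2\bigr)^{-1/2}$ with $\bigl(u^2-\tilde{c}^2\bigr)^{-1/2}$ on $[a,b+\epsilon]$. The divergence is in how you control the ratio, and there lies a genuine gap. Bounding the numerator by its maximum and the denominator by its minimum gives
$C=\Bigl(\bigl((b+\epsilon)^2-\tilde{c}^2\bigr)/\bigl((a-\epsilon)^2-c^2\bigr)\Bigr)^{1/2}$,
and since $(a-\epsilon)^2-c^2=(a-\epsilon-c)(a-\epsilon+c)$, this constant blows up like $(a-c-\epsilon)^{-1/2}$ as the gap $a-(c+\epsilon)$ closes, and it also grows with $b$. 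If the quantifiers are read literally, with $a,b,c,\epsilon$ fixed in advance, your inequality is of course true --- but under that reading the lemma is nearly trivial, since both sides are finite positive numbers and any $C\geq \mathrm{LHS}/\mathrm{RHS}$ works. The whole content of the lemma is that $C$ can be chosen uniformly enough to be pulled outside the $(y,z)$-integrals in which it is applied: in Case 1 of the sufficiency part of Theorem 2.1, and again in Lemma 3.2, one takes $a=\rho(y)$, $c=\rho(z)$, $\epsilon=m(y_1-z_1)/(1-m^2)$ with $(y,z)$ ranging over $D_1$, where the gap $\bigl(\rho(y)-\tfrac{my_1}{1-m^2}\bigr)-\bigl(\rho(z)-\tfrac{mz_1}{1-m^2}\bigr)$ becomes arbitrarily small; indeed this near-diagonal region is precisely what produces the logarithmic singularity $\ln\bigl(\rho^2(y)-\rho^2(z)\bigr)$, and hence the condition $\int_{0^+}r\ln\frac{1}{r}f(r)\,dr<\infty$, that the theorem is about. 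Your constant degenerates exactly there, so the resulting inequality cannot be integrated over $(y,z)$ to yield the paper's bounds.

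The paper avoids this by an exact factorization rather than a max/min estimate: for $u>a>c+\epsilon$,
\begin{equation*}
(u-\epsilon)^2-c^2=\Bigl(1-\frac{2\epsilon}{u+c+\epsilon}\Bigr)\bigl(u^2-\tilde{c}^2\bigr),
\end{equation*}
and the prefactor is bounded below by $\min\{\tfrac12,\,c/(c+\epsilon)\}$ (treating the cases $c\geq\epsilon$ and $c<\epsilon$ separately, using $u+c+\epsilon>2(c+\epsilon)$). This gives $C=\min\{\tfrac12,\,c/a\}^{-1/2}$, which is independent of $b$ and, crucially, of the gap $a-(c+\epsilon)$. If you replace your max/min step by this factorization, the rest of your write-up goes through verbatim.
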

\begin{proof} See the proof in appendix.
\end{proof}

\begin{lem}\label{lem1}
Let a function $g:\Omega\subset\mathds{R}_{+} \rightarrow \mathds{R}_{+}$ be positive and not increasing. We define following subsets of $\mathds{R}^2\times\mathds{R}^2$
\begin{equation}\begin{split}
D_1(x,y)&=\{(x,y)|\;0<\rho(x)-\frac{m x_1}{1-m^2}<\rho(y)-\frac{m y_1}{1-m^2}<t, y_1>x_1\},\\
D_2(x,y)&=\{(x,y)|\;0<\rho(x)-\frac{m x_1}{1-m^2}<\rho(y)-\frac{m y_1}{1-m^2}<t, y_1<x_1, \rho(y)>\rho(x)\},\\
D_3(x,y)&=\{(x,y)|\;0<\rho(x)-\frac{m x_1}{1-m^2}<\rho(y)-\frac{m y_1}{1-m^2}<t, y_1<x_1, \rho(y)<\rho(x)\}.
\end{split}\end{equation}
For all $t>0$, three integrations,
\begin{equation}\begin{split}\label{lemin1}
&\int_{D_1(x,y)}\frac{f(\|y-x\|)}{\rho(y)}g(\rho^2(y)-\rho^2(x))\;dxdy,\\
&\int_{D_2(x,y)}\frac{f(\|y-x\|)}{\rho(y)}g(\rho^2(y)-\rho^2(x))\;dxdy,\\
\text{and}\;\;&\int_{D_3(x,y)}\frac{f(\|y-x\|)}{\rho(x)}g(\rho^2(x)-\rho^2(y))\;dxdy
\end{split}\end{equation}
are bounded by
\begin{equation}\label{lemin2}
\int^{2\sqrt{\frac{1+m}{1-m}}t}_{0}\int^{2(1+m)t}_{r\sqrt{1-m^2}}\int^{\frac{\pi}{2}}_{\eta^{-1}\left(\frac{w}{(1-m^2)r}\right)}rf(r)g\Big(\frac{rw\eta(\theta)}{1-m^2}-r^2\eta^2(\theta)\Big)\Big(\ln(4(1+m)t)-\ln{w}\Big)\; d\theta dw dr,
\end{equation}
where
\begin{equation}\label{eta}
\eta(\theta):=\sqrt{\frac{\cos^2{\theta}}{(1-m^2)^2}+\frac{\sin^2{\theta}}{1-m^2}}.
\end{equation}
\end{lem}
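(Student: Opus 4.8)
The plan is to bound the three integrals in (\ref{lemin1}) by transforming to a convenient set of coordinates in which the defining inequalities of $D_1, D_2, D_3$ become tractable, and then to recognize the resulting expression as the claimed bound (\ref{lemin2}). First I would change variables so that the integrand is expressed through the radial/angular quantities that naturally appear in $\rho$. The presence of $\rho(x)=\sqrt{x_1^2/(1-m^2)^2+x_2^2/(1-m^2)}$ and the function $\eta(\theta)$ in (\ref{eta}) strongly suggests introducing, for each point, a ``skewed polar'' representation $x=(r_x,\theta_x)$ for which $\rho(x)=r_x\,\eta(\theta_x)$, so that the weight $1/\rho(y)$ and the argument $\rho^2(y)-\rho^2(x)$ of $g$ both become explicit. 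The variable $r$ in (\ref{lemin2}) should correspond to $\|y-x\|$ (matching $f(r)$ and $r\,dr$ from the Jacobian), while $w$ and $\theta$ encode the relative geometry of the pair $(x,y)$.

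The key steps, in order, are as follows. First, exploit the fact that $f$ depends only on $\|y-x\|$ and $g$ only on the difference $\rho^2(y)-\rho^2(x)$ to reduce the two-point integral over $\mathds{R}^2\times\mathds{R}^2$ to an integral over the separation vector $y-x$ together with one ``base-point'' integration; this base-point integration is exactly what produces the logarithmic factor $\ln(4(1+m)t)-\ln w$, obtained from integrating $\rho(y)^{-1}$ (or $\rho(x)^{-1}$) over the admissible range dictated by the constraints $0<\rho(x)-\frac{mx_1}{1-m^2}$ and $\rho(y)-\frac{my_1}{1-m^2}<t$. Second, set $w$ to be the appropriate combination (playing the role of $\rho^2(y)-\rho^2(x)$ rescaled, so that $g$ evaluated at $\tfrac{rw\eta(\theta)}{1-m^2}-r^2\eta^2(\theta)$ emerges) and track how the domains $D_1,D_2,D_3$ translate into the limits $r\in(0,2\sqrt{\tfrac{1+m}{1-m}}\,t)$, $w\in(r\sqrt{1-m^2},\,2(1+m)t)$, and $\theta\in(\eta^{-1}(\tfrac{w}{(1-m^2)r}),\tfrac{\pi}{2})$. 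Third, use the monotonicity hypothesis that $g$ is nonincreasing to dominate the three separate domain integrals by a single common bound: on each of $D_1,D_2,D_3$ the argument of $g$ can be replaced by a smaller quantity, making $g$ larger but keeping the integral finite and uniform in the domain labels, which is precisely why all three integrals admit the same majorant (\ref{lemin2}).

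Here the lower limit $\eta^{-1}(\tfrac{w}{(1-m^2)r})$ arises because the angular constraint coming from $y_1\gtrless x_1$ together with $\rho(y)>\rho(x)$ forces $\eta(\theta)$ to exceed a threshold determined by $w$ and $r$; inverting $\eta$ (which is monotone on $(0,\tfrac{\pi}{2})$ for $0\le m<1$) gives this bound. The upper angular limit $\tfrac{\pi}{2}$ and the ranges for $w$ reflect the outer boundary $\rho(\cdot)-\frac{m(\cdot)_1}{1-m^2}<t$ after the rescaling.

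The main obstacle I anticipate is the second step: carefully computing the Jacobian of the skewed-polar change of variables and then, for each of $D_1,D_2,D_3$ separately, verifying that the image of the defining inequalities under this transformation lands inside the single rectangle-with-curved-edge region $\{0<r<2\sqrt{\tfrac{1+m}{1-m}}t,\; r\sqrt{1-m^2}<w<2(1+m)t,\; \eta^{-1}(\cdots)<\theta<\tfrac{\pi}{2}\}$. The asymmetry between the three domains (which differ in the signs of $y_1-x_1$ and $\rho(y)-\rho(x)$) means the reductions are not identical, and the monotonicity of $g$ must be invoked differently on $D_1$ versus $D_2$ versus $D_3$ to force the common majorant; ensuring that no region is undercounted and that the logarithmic estimate for the $\rho^{-1}$ integration is uniform across all three cases is where the real bookkeeping lies. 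Lemma~\ref{lem0} is presumably the tool that lets me absorb the $\epsilon$-type shifts generated when the constraint $\rho(x)-\frac{mx_1}{1-m^2}>0$ is converted into clean integration limits, so I would invoke it at the stage where the inner $w$- or $\theta$-integral is estimated.
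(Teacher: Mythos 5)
Your outline does capture the skeleton of the paper's argument---pass to the separation vector $z=y-x$, use coordinates adapted to $\rho$, integrate out the base point to produce the logarithm, and push all three domains into one common region---but the step you yourself flag as the main obstacle is exactly where your plan, as described, breaks down. You claim that $g$ depends ``only on the difference $\rho^2(y)-\rho^2(x)$'' and that this permits reducing to an integral over the separation vector plus a base-point integration; but $\rho^2(y)-\rho^2(y-z)$ is \emph{not} a function of $z$ alone, so the $g$-factor cannot be held fixed while the base point is integrated out. The paper resolves this with a device you never identify: elliptic coordinates $y=(\nu\cos\theta_0,\frac{\nu}{\sqrt{1-m^2}}\sin\theta_0)$ and $z$ with radius $\tilde r$ and relative angle $\tilde\theta$, in which $\rho(y)=\frac{\nu}{1-m^2}$, $\rho(z)=\frac{\tilde r}{1-m^2}$, and one has the exact identity $\rho^2(y)-\rho^2(y-z)=\frac{2\nu\tilde r\cos\tilde\theta-\tilde r^2}{(1-m^2)^2}$. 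The pivotal substitution is $w=2\nu\cos\tilde\theta$, a projection of the base point onto the separation direction---\emph{not} ``$\rho^2(y)-\rho^2(x)$ rescaled'' as you propose (indeed, solving $\frac{w\rho(z)}{1-m^2}-\rho^2(z)=\rho^2(y)-\rho^2(x)$ for $w$ shows $w$ is a genuinely mixed quantity). After this substitution the argument of $g$ depends on $(\tilde r,w)$ only, the weight $1/\rho(y)$ cancels the $\nu$ coming from the area element, and the residual $\nu$-dependence is the Jacobian factor $(4\nu^2-w^2)^{-1/2}$, whose integral over $\nu\in(\frac{w}{2},(1+m)t)$ is what yields $\ln(4(1+m)t)-\ln w$. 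With $w$ proportional to $\rho^2(y)-\rho^2(x)$, the $g$-argument would be a function of $w$ alone, the combination $\frac{rw\eta(\theta)}{1-m^2}-r^2\eta^2(\theta)$ could never emerge, and the base-point integral would not decouple; so the bound (\ref{lemin2}) is unreachable along the route you describe.

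Three secondary corrections. First, monotonicity of $g$ is not what makes the three integrals share the majorant: the paper never replaces the argument of $g$ (the changes of variables preserve it exactly); it uses only positivity of $f$ and $g$ together with the containment of each domain (after relabeling $x\leftrightarrow y$ in $D_3$, which turns $\frac{f}{\rho(x)}g(\rho^2(x)-\rho^2(y))$ into the $D_1$ form) in the single set $\{\rho(y-z)<\rho(y)\}\cap\{\rho(y)<\frac{t}{1-m}\}$. Second, Lemma \ref{lem0} plays no role in this lemma; it is invoked in Theorem \ref{th1} and Lemma \ref{lem5} to remove the shifts $\frac{m(y_1-z_1)}{1-m^2}$ from the product of Green's functions \emph{before} the present lemma is applied, so you would search in vain for a place to use it here. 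Third, your angular constraint points the wrong way: the condition is $\rho(z)<\frac{w}{1-m^2}$, i.e.\ $\eta(\theta)<\frac{w}{(1-m^2)r}$ (below the threshold), which gives $\theta>\eta^{-1}\left(\frac{w}{(1-m^2)r}\right)$ precisely because $\eta$ is \emph{decreasing} on $[0,\frac{\pi}{2}]$.
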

\begin{proof}
Let $z=y-x$ for fixed y . We define the subsets of $\mathds{R}^2\times\mathds{R}^2$
\begin{equation}\begin{split}
\tilde{D}_1(y,z)&=\{(y,z)|\;0<\rho(y-z)-\frac{m (y_1-z_1)}{1-m^2}<\rho(y)-\frac{m y_1}{1-m^2}<t, z_1>0\},\\
\tilde{D}_2(y,z)&=\{(y,z)|\;0<\rho(y-z)-\frac{m (y_1-z_1)}{1-m^2}<\rho(y)-\frac{m y_1}{1-m^2}<t, z_1<0, \rho(y)>\rho(y-z)\}.\\
\end{split}\end{equation}
Then the first and second term of (\ref{lemin1}) are equal to
\begin{equation}\begin{split}
\int_{\tilde{D}_1(y,z)}\frac{1}{\rho(y)}f(\|z\|)g(\rho^2(y)-\rho^2(y-z))\;dzdy,\\
\int_{\tilde{D}_2(y,z)}\frac{1}{\rho(y)}f(\|z\|)g(\rho^2(y)-\rho^2(y-z))\;dzdy,
\end{split}\end{equation}
respectively.

Let T be a transform $T:(y,z)\rightarrow (\nu,\theta_0,\tilde{r},\tilde{\theta})\;$ such that $y=(\nu\cos{\theta_0},\frac{\nu}{\sqrt{1-m^2}}\sin{\theta_0})$ and $z=(\tilde{r}\cos{(\tilde{\theta}-\theta_0}),\frac{\tilde{r}}{\sqrt{1-m^2}}\sin{(\tilde{\theta}-\theta_0}))$ for fixed y, $0<\tilde{\theta},\theta_0\leq 2\pi$ and $\nu, \tilde{r}>0$. Since $\{y|\;0<\rho(y)-\frac{m y_1}{1-m^2}<t\}\subset \{y|\;\rho(y)<\frac{t}{1-m}\}$, we obtain
\begin{equation*}\begin{split}
&T(\{(y,z)|\; \rho(y-z)<\rho(y)\})=\left\{(\nu,\theta_0,\tilde{r},\tilde{\theta})|\; \frac{\tilde{r}^2}{(1-m^2)^2}<\frac{2\tilde{r}\nu}{(1-m^2)^2}\cos{\tilde{\theta}}\right\},\\
&T(\{y|\; \rho(y)-\frac{m y_1}{1-m^2}<t\})\subset\{\nu|\;\nu<(1+m)t\}.
\end{split}\end{equation*}
These lead to
\begin{equation*}\begin{split}
T(\tilde{D}_1(y,z))&\subset\{(\nu,\theta_0,\tilde{r},\tilde{\theta})|\; \tilde{r}<2\nu\cos{\tilde{\theta}}\}\cap\{\nu|\; \nu<(1+m)t\}\cap\{(\theta,\tilde{\theta})|\;\cos{(\tilde{\theta}-\theta_0)}>0\}\\
&=\{(\nu,\theta_0,\tilde{r},\tilde{\theta})|\;0<\tilde{r}<2\nu, 0<\tilde{\theta}<\cos^{-1}\left(\frac{\tilde{r}}{2\nu}\right),\;\nu<(1+m)t,\;\cos{(\tilde{\theta}-\theta_0)}>0\}.
\end{split}\end{equation*}
Here, we use the fact $\tilde{D}_1(y,z)\subset\{(y,z)|\; \rho(y-z)<\rho(y),\; 0<\rho(y)-\frac{m y_1}{1-m^2}<t,\; z_1>0\}$. Therefore, by Fubini's theorem, the first term of (\ref{lemin1}) is bounded by
\begin{equation}\label{abe123412}
\int^{(1+m)t}_{0}\int^{2\pi}_{0}\int^{2\nu}_{0}\int^{\cos^{-1}{(\frac{\tilde{r}}{2\nu})}}_{0}\frac{\tilde{r}f(\|z\|)}{1-m^2}g\Big(\frac{2\tilde{r}\nu\cos{\tilde{\theta}}}{(1-m^2)^2}-\frac{\tilde{r}^2}{(1-m^2)^2}\Big) \;d\tilde{\theta} d\tilde{r}d\theta_0 d\nu.
\end{equation}
Now we set $w=2\nu\cos{\tilde{\theta}}$ , then (\ref{abe123412}) is equal to
\begin{equation}\label{abe34122}
\int^{2(1+m)t}_{0}\int^{w}_{0}\int^{2\pi}_{0}\left(\int^{(1+m)t}_{w/2}\frac{\tilde{r}f(\|z\|)}{(1-m^2)\sqrt{4\nu^2-w^2}}g\Big(\frac{\tilde{r}w-\tilde{r}^2}{(1-m^2)^2}\Big) \;d\nu \right) \;d\theta_0d\tilde{r}dw.
\end{equation}
By $\int^{(1+m)t}_{\frac{w}{2}}(4\nu^2-w^2)^{-1/2}d\nu=\ln{\Big(2\nu+2\sqrt{\nu^2-(\frac{w}{2})^2}\Big)}|^{(1+m)t}_{\frac{w}{2}}$, the integral (\ref{abe34122}) reduces to
\begin{equation}\begin{split}\label{abe17845}
\int^{2(1+m)t}_{0}\int^{w}_{0}\int^{2\pi}_{0}\frac{\tilde{r}f(\|z\|)}{(1-m^2)}g\Big(\frac{\tilde{r}w-\tilde{r}^2}{(1-m^2)^2}\Big)\Big(\ln(4(1+m)t)-\ln{w}\Big)\; d\theta_0d\tilde{r}dw.
\end{split}\end{equation}
Since $\frac{\tilde{r}}{1-m^2}=\rho(z)$ in (\ref{abe17845}), the integral (\ref{abe17845}) can be reformulated as follows.
\begin{equation}\begin{split}\label{abe1}
\int^{2(1+m)t}_{0}\int_{\{z|\;\rho(z)<\frac{w}{1-m^2}\}}f(\|z\|)g\Big(\frac{w\rho(z)}{1-m^2}-\rho^2(z)\Big)\Big(\ln(4(1+m)t)-\ln{w}\Big)\; dzdw.
\end{split}\end{equation}
The function $\eta(\theta)$ in (\ref{eta}) is $\pi$- periodic, $\frac{1}{\sqrt{1-m^2}}\leq \eta(\theta)\leq \frac{1}{1-m^2}$ and $\eta^{-1}$ exist on $[0,\frac{\pi}{2}]$. By imposing $z=(r\cos\theta,r\sin\theta)$ and Fubini's theorem, the integral (\ref{abe1}) can be rewritten as follows.
\begin{equation}\begin{split}\label{zzz12}
&4\int^{2(1+m)t}_{0}\int^{\frac{w}{\sqrt{1-m^2}}}_{0}\int^{\frac{\pi}{2}}_{\eta^{-1}\left(\frac{w}{(1-m^2)r}\right)}rf(r)g\Big(\frac{rw\eta(\theta)}{1-m^2}-r^2\eta^2(\theta)\Big)\Big(\ln(4(1+m)t)-\ln{w}\Big)\; d\theta dr dw\\
&=4\int^{2\sqrt{\frac{1+m}{1-m}}t}_{0}\int^{2(1+m)t}_{r\sqrt{1-m^2}}\int^{\frac{\pi}{2}}_{\eta^{-1}\left(\frac{w}{(1-m^2)r}\right)}rf(r)g\Big(\frac{rw\eta(\theta)}{1-m^2}-r^2\eta^2(\theta)\Big)\Big(\ln(4(1+m)t)-\ln{w}\Big)\; d\theta dw dr.
\end{split}\end{equation}

In a similar way, the second and third integral in $(\ref{lemin1})$ can be proved in the following. Let $z_1<0$, then we obtain
\begin{equation*}\begin{split}
&\{(y,z)|\;0<\rho(y-z)-\frac{m(y_1-z_1)}{1-m^2}<\rho(y)-\frac{my_1}{1-m^2}<t\}\cap \{(y,z)|\;\rho(y-z)<\rho(y)\}\\
&=\{(y,z)|\;0<\rho(y-z)-\frac{m(y_1-z_1)}{1-m^2}<\rho(y)-\frac{my_1}{1-m^2}\}\cap\{(y,z)|\;\rho(y-z)<\rho(y)\}\cap\{y|\;\rho(y)-\frac{my_1}{1-m^2}<t\}\\
&\subset\{(y,z)|\;\rho(y-z)<\rho(y)\}\cap\{y|\;0<\rho(y)-\frac{my_1}{1-m^2}<t\}\\
&\subset \{(y,z)|\;\rho(y-z)<\rho(y)\}\cap\{y|\;\rho(y)<\frac{t}{1-m}\}.
\end{split}\end{equation*}
Therefore, the second integral in (\ref{lemin1}) is bounded by (\ref{zzz12}).

Moreover, if $y_1<x_1$,
\begin{equation*}\begin{split}
&\{(x,y)|\;0<\rho(x)-\frac{mx_1}{1-m^2}<\rho(y)-\frac{my_1}{1-m^2}<t\}\cap \{(x,y)|\;\rho(x)>\rho(y)\}\\
&\subset \{(x,y)|\;\rho(x)>\rho(y)\}\cap\{x|\;0<\rho(x)-\frac{mx_1}{1-m^2}<t\}.
\end{split}\end{equation*}
In consequence, the third integral in (\ref{lemin1}) is bounded by
\begin{equation}
\int_{\{(x,y)|\;\rho(x)>\rho(y),y_1<x_1,0<\rho(x)-\frac{mx_1}{1-m^2}<t\}}\frac{f(\|y-x\|)}{\rho(x)}g(\rho^2(x)-\rho^2(y))\;dxdy.
\end{equation}
Let $z=x-y$ for fixed x. Let $\bar{T}$ be a transform $\bar{T} : (x,y))\rightarrow (\nu,\theta_0,\tilde{r},\tilde{\theta})$ such that $x=(\nu\cos{\theta_0},\frac{\nu}{\sqrt{1-m^2}}\sin{\theta_0})$ and $z=(\tilde{r}\cos{(\tilde{\theta}-\theta_0}),\frac{\tilde{r}}{\sqrt{1-m^2}}\sin{(\tilde{\theta}-\theta_0}))$ for fixed x, $0<\tilde{\theta},\theta_0\leq 2\pi$ and $\nu, \tilde{r}>0$. Then $\bar{T}(\{(x,y)|\;\rho(x)>\rho(y),y_1<x_1,0<\rho(x)-\frac{mx_1}{1-m^2}<t\})\subset\{(\nu,\theta_0,\tilde{r},\tilde{\theta})|\;0<\tilde{r}<2\nu, 0<\tilde{\theta}<\cos^{-1}\left(\frac{\tilde{r}}{2\nu}\right),\;\nu<(1+m)t,\;\cos{(\tilde{\theta}-\theta_0)}>0\}$.
In doing so, the third integration of (\ref{lemin1}) is also bounded by (\ref{zzz12}).
\end{proof}

\begin{lem}\label{lem3}
Let a function $g:\Omega\subset\mathds{R}_{+} \rightarrow \mathds{R}_{+}$ be positive and not increasing. Then, for small $t>0$
\begin{equation}\begin{split}\label{lemin51}
&\int_{\hat{D}_1(x,y)}f(\|y-x\|)g(\rho^2(x)-\rho^2(y))\;dxdy\geq C\int^{2c_1 t}_{0}rf(r)\int^{2c_1 t}_{\frac{r}{1-m^2}} g(\frac{rw}{1-m^2}-\frac{r^2}{(1-m^2)^2})\sqrt{4c^2_1 t^2-w^2}\;dwdr,
\end{split}\end{equation}
where $\hat{D}_1(x,y)=D_1(x,y)\cup\{(x,y)|\rho(x)<\rho(y)\;\}$ and $C$ is a positive constant depending on the Mach number $m$.
\end{lem}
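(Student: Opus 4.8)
The plan is to treat Lemma \ref{lem3} as the lower-bound counterpart of Lemma \ref{lem1}: whereas there the domain was enlarged to obtain an upper bound, here I would restrict $\hat{D}_1$ to a convenient subregion and keep track only of the constants that get discarded. Since the integrand is nonnegative, any such restriction only decreases the integral, so it suffices to exhibit one subregion on which the integral is bounded below by the right-hand side. Throughout I read the argument of $g$ as the nonnegative quantity $\rho^2(y)-\rho^2(x)$ available on $\hat{D}_1\supset D_1$ (where $\rho(x)<\rho(y)$).

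First I would linearize the $\rho$-geometry. Writing $\xi=\big(\tfrac{x_1}{1-m^2},\tfrac{x_2}{\sqrt{1-m^2}}\big)$ turns $\rho$ into the Euclidean norm, so that $\rho(x)=|\xi_x|$, $\rho(y)=|\xi_y|$ and $\rho(y-x)=|\xi_y-\xi_x|$, at the cost of a constant Jacobian $(1-m^2)^{3/2}$ absorbed into $C$. In these coordinates the time-strip condition $\rho(y)-\tfrac{my_1}{1-m^2}<t$ reads $|\xi_y|-m\,\xi_{y,1}<t$, an ellipse that contains the ball $\{|\xi_y|<c_1 t\}$ with $c_1=\tfrac{1}{1+m}$; restricting $y$ to this ball keeps $y$ inside the strip for every direction and produces the radius $c_1 t$ appearing on the right.

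Next I would use polar coordinates $z=y-x=(r\cos\theta,r\sin\theta)$ for the increment, so that $f(\|z\|)=f(r)$ exactly and $dz=r\,dr\,d\theta$, while $\rho(z)=r\,\eta(\theta)$ with $\tfrac{1}{\sqrt{1-m^2}}\le\eta(\theta)\le\tfrac{1}{1-m^2}$. Restricting $\theta$ to a fixed sector about the $x_1$-axis costs only a constant and removes the angular integral, leaving the factor $rf(r)$ and the range $r\in(0,2c_1 t)$. For fixed $z$, the ordering constraint $\rho(x)<\rho(y)$ becomes the half-space $\xi_y\cdot\widehat{\xi_z}>\tfrac12|\xi_z|$, and on setting $w=2\,\xi_y\cdot\widehat{\xi_z}$ one finds $\rho^2(y)-\rho^2(x)=w\rho(z)-\rho^2(z)$, independent of the transverse coordinate of $\xi_y$. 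Integrating that transverse coordinate across $\{|\xi_y|<c_1 t\}$ for fixed $w$ yields a chord of length $\sqrt{4c_1^2t^2-w^2}$, exactly the weight on the right-hand side, while the half-space and ball constraints pin the $w$-range to $(\rho(z),2c_1t)$.

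The main obstacle is the final replacement of the direction-dependent data by the clean expressions in the statement. I would use that $g$ is non-increasing together with the two-sided bound on $\eta(\theta)$ to pass from the true argument $w\rho(z)-\rho^2(z)$ to $\frac{rw}{1-m^2}-\frac{r^2}{(1-m^2)^2}$ and to tighten the lower $w$-limit from $\rho(z)$ up to $\frac{r}{1-m^2}$; getting this comparison in the direction required for a lower bound, while keeping the $w$-limits and the weight exactly as claimed, is the delicate point, and it is where the constant $C=C(m)$ and the restriction to small $t$ (which keeps the argument of $g$ inside its domain $\Omega$) are consumed. Collecting the discarded constants into $C$ then yields (\ref{lemin51}).
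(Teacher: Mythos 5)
Your reductions up to the final step are sound and essentially reproduce the paper's own route: the paper likewise discards all of $\hat{D}_1$ except a set of the form $\{\rho(y-z)<\rho(y)<ct,\;z_1>0\}$, uses coordinates adapted to $\rho$ for $y$ together with genuine polar coordinates for the increment $z$ (so that $f(\|z\|)=f(r)$ exactly), introduces $w=2\nu\cos\theta$, and produces the weight $\sqrt{4c_1^2t^2-w^2}$ by integrating the radial variable of $y$, via $\int_{w/2}^{c_1t}\nu(4\nu^2-w^2)^{-1/2}\,d\nu=\frac14\sqrt{4c_1^2t^2-w^2}$; your transverse-chord computation of that same weight is an equivalent, in fact cleaner, device. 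You are also right to read the argument of $g$ as $\rho^2(y)-\rho^2(x)$ (the sign in the statement is a typo), and the identity $\rho^2(y)-\rho^2(x)=w\rho(z)-\rho^2(z)$ and the $w$-range $(\rho(z),2c_1t)$ are correct.

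The genuine gap is the step you yourself flag as delicate, and the tool you propose for it---monotonicity of $g$ combined with the two-sided bound on $\eta$---does not close it. Set $p=\rho(z)=r\eta(\theta)$ and $q=r/(1-m^2)$, so $\sqrt{1-m^2}\,q\le p\le q$. To replace $g(p(w-p))$ by $g(q(w-q))$ from below you need the pointwise inequality $p(w-p)\le q(w-q)$; but $q(w-q)-p(w-p)=(q-p)(w-p-q)$, which is negative whenever $w<p+q$. Since the claimed $w$-integral starts at $w=q<p+q$, the required inequality fails on the nonempty interval $(q,p+q)$ for every $\theta$ off the axis, and shrinking the sector does not help, since $p<q$ strictly there. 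What does work is a substitution at the level of the integral rather than a pointwise bound: for fixed $r,\theta$ change variables by $w=p+\frac{q}{p}(v-q)$, so that $p(w-p)=q(v-q)$ identically and $dw=\frac{q}{p}\,dv\ge dv$; from $\sqrt{1-m^2}\,q\le p\le q$ one gets $p+\frac{q}{p}(v-q)\le v/\sqrt{1-m^2}$, hence $\sqrt{4c_1^2t^2-w^2}\ge\sqrt{4(c_1')^2t^2-v^2}$ for $v\le 2c_1't$ with $c_1'=\sqrt{1-m^2}\,c_1$, and therefore $\int_p^{2c_1t}g(p(w-p))\sqrt{4c_1^2t^2-w^2}\,dw\ge\int_q^{2c_1't}g(q(v-q))\sqrt{4(c_1')^2t^2-v^2}\,dv$. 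This yields (\ref{lemin51}) with $c_1$ shrunk by the factor $\sqrt{1-m^2}$, which is harmless because the lemma's $c_1$ is only a constant depending on $m$. (To be fair, the published proof elides exactly this conversion---it ends with $g(\rho^2(y)-\rho^2(y-z))$ still sitting inside the integral---but as written your sketch is incomplete at precisely the one step that needs an argument.)
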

\begin{proof} Let $z=y-x$ for fixed $y$. Since $\{(y,z)|\;\rho(y-z)<\rho(y)<\frac{t}{1+m},0<z_1\}\subset\hat{D}_1(y-z,y)$, the left-hand side integral in (\ref{lemin51}) is greater than
\begin{equation}\begin{split}\label{eq29456}
\int_{A_1}f(\|z\|)g\left(\rho^2(y)-\rho^2(y-z)\right)\;dzdy,
\end{split}\end{equation}
where $A_1=\{(y,z)|\;\rho(y-z)<\rho(y)<\frac{t}{1+m}, 0<z_1\}$. Let T be a transform $T:(y,z)\rightarrow(\nu,\theta_0,r,\theta)$ such that $y=(\nu\cos{\theta_0},\frac{1}{1-m^2}\nu\sin{\theta_0})$ and $z=(r\cos(\theta-\theta_0),r\sin(\theta-\theta_0))$. Then $\{(\nu,\theta_0,r,\theta)|\;\frac{r}{(1-m^2)}<2\nu\cos{\theta}\}\subset T(\{(y,z)|\;\rho(y-z)<\rho(y)\})$ and $\{\nu|\;\nu<(1+m)t\}\subset T(\{y|\;\rho(y)<\frac{t}{1-m}\})$. Therefore, the integral (\ref{eq29456}) is greater than
\begin{equation*}\begin{split}
&\int_{T^{-1}(\tilde{A}_1)}f(\|z\|)g\left(\rho^2(y)-\rho^2(y-z)\right)\;dzdy\\
&\geq C\int^{(1+m)t}_{0}\int^{2(1-m^2)\nu}_{0}\int^{2\pi}_{0}\int^{\cos^{-1}{\frac{r}{2\nu(1-m^2)}}}_{0}\nu rf(r) 1_{\{\cos{(\theta-\theta_0)}>0\}}\tilde{g}(\nu,\theta_0,r,\theta)\;d\theta d\theta_0 dr d\nu,
\end{split}\end{equation*}
where $\tilde{A}_1=\{(\nu,\theta_0,r,\theta)|\;\frac{r}{(1-m^2)}<2\nu\cos{\theta}, \nu<(1+m)t, \cos{(\theta-\theta_0)}>0\}$ and $\tilde{g}(\nu,\theta_0,r,\theta)=g(\rho^2(y)-\rho^2(y-z))$. Let $w=2\nu\cos{\theta}$. Since the function $g$ is not increasing and the non-empty set $\{(\theta,\theta_0)|\;\cos{(\theta-\theta_0)}>0\}$ is bounded, we have a lower bound,
\begin{equation*}\begin{split}
&C\int^{(1+m)t}_{0}\int^{2\|y\|}_{0}rf(r)\int^{2\|y\|}_{\frac{r}{1-m^2}} \frac{\nu g(\rho^2(y)-\rho^2(y-z))}{\sqrt{4\nu^2-w^2}}\;dwdr d\nu\\
&\geq C\int^{2(1+m) t}_{0}rf(r)\int^{2(1+m) t}_{\frac{r}{1-m^2}} g(\rho^2(y)-\rho^2(y-z))\sqrt{4(1+m)^2t^2-w^2}\;dwdr.
\end{split}\end{equation*}
\end{proof}

${}$\\
{\bf Proof of Theorem \ref{th1}}
\begin{proof} {\bf (proof of necessity) } By the same way to Theorem 1 in Dalang~\cite{DF1998} (refer to pages $199-200$ of \cite{DF1998}), it is enough to show that for fixed $0\leq m <1$ and a small $t>0$
\begin{equation}\begin{split}\label{eqnece1}
\int^{t}_{0}&\int_{\mathds{R}^2}\int_{\mathds{R}^2}G(t-s,x-\tilde{y},m)G(t-s,x-\tilde{z},m)f(\|\tilde{y}-\tilde{z}\|)\;d\tilde{y}d\tilde{z}ds\geq C\int^{t}_{0}rf(r)\ln{\frac{1}{r}}dr,
\end{split}\end{equation}
where $C$ is a positive constant depending on the March number $m$. Let $\tilde{s}=t-s$, $y=x-\tilde{y}$, $z=x-\tilde{z}$. Then, by Fubini's theorem, the left-hand side of (\ref{eqnece1}) is greater than
\begin{equation}\begin{split}\label{eqnece2}
\int_{\hat{D}_1(y,z)}f(\|y-z\|)\int^{t}_{\rho(y)-\frac{m y_1}{1-m^2}}G(\tilde{s},y,m)G(\tilde{s},z,m)\;d\tilde{s}dzdy.
\end{split}\end{equation}
Set $\bar{s}=\tilde{s}+\frac{m y_1}{1-m^2}$. By using the fact $s^2-\rho^2(z)\geq(s+\frac{m(z_1-y_1)}{1-m^2})^2-\rho^2(z)$ in $\{(y,z)|\;y_1>z_1\}$, the integral (\ref{eqnece2}) is greater than
\begin{equation}\begin{split}\label{eqnece3}
C\int_{D_1(y,z)}f(\|y-z\|)\int^{t+\frac{m y_1}{1-m^2}}_{\rho(y)}\left(\bar{s}^2-\rho^2(y)\right)^{-1/2}\left(\bar{s}^2-\rho^2(z)\right)^{-1/2}d\bar{s}dzdy.
\end{split}\end{equation}
Let $\tau=s^2$. Since $\int(s^2+as+b)^{-1/2}ds=\ln{(a+2s+2\sqrt{s^2+as+b})}$, we can check that the integral (\ref{eqnece3}) is greater than
\begin{equation*}\begin{split}
C\int_{\hat{D}_1(y,z)}&f(\|y-z\|)\left(\ln{\Big(\sqrt{(t+\frac{my_1}{1-m^2})^2-\rho^2(y)}+\sqrt{(t+\frac{my_1}{1-m^2})^2-\rho^2(z)}\Big)^2}-\ln{\left(\rho^2(y)-\rho^2(z)\right)}\right)dzdy.
\end{split}\end{equation*}
From the condition $\rho(y)-\frac{my_1}{1-m^2}<t$ in $D_1(y,z)$, we obtain
\begin{equation}\label{ttt}
\frac{t}{1+m}\leq t+\frac{my_1}{1-m^2}\leq\frac{t}{1-m}
\end{equation}
Therefore, we have a lower bound,
\begin{equation}\begin{split}\label{eq1111}
C&\int_{\hat{D}_1(y,z)}f(\|y-z\|)\ln{\Big(2\frac{t^2}{(1-m)^2} -\rho^2(y)-\rho^2(z)\Big)}dzdy-C\int_{\hat{D}_1(y,z)}f(\|y-z\|)\ln{\left(\rho^2(y)-\rho^2(z)\right)}\;dzdy.
\end{split}\end{equation}
For small $t$, we obtain $\ln{\Big(2\frac{t^2}{(1-m)^2} -\rho^2(y)-\rho^2(z)\Big)}<0$ and $\ln{\left(\rho^2(y)-\rho^2(z)\right)}<0$ in $D_1(y,z)$. Hence, the first integration of (\ref{eq1111}) is greater than
\begin{equation*}\begin{split}
\int_{\hat{D}_1(y,z)}f(\|y-z\|)\ln{\Big(\frac{t^2}{(1-m)^2} -\rho^2(y)\Big)}\;dzdy.
\end{split}\end{equation*}
Now, we can check that $\{(y,z)|\;\rho(z)<\rho(y)<\frac{t}{1+m}, z_1<y_1\}\subset\hat{D}_1(y,z)$. Let $\bar{z}=y-z$ for fixed $y$. By using of polar coordinates $\bar{z}=(r\cos\theta,r\sin\theta)$, the above integration is greater than
\begin{equation*}\begin{split}
C\int_{\rho(y)<\frac{t}{1+m}}\ln{\Big(\frac{t^2}{(1-m)^2} -\rho^2(y)\Big)}\int^{\frac{t}{1+m}}_{0}rf(r)\;drdy>-\infty.
\end{split}\end{equation*}
By Lemma \ref{lem3}, the second integral in (\ref{eq1111}) is greater than
\begin{equation*}\begin{split}
&C\int^{(1+m) t}_{0}rf(r)\ln{\frac{1}{r}}\int^{(1+m) t}_{\frac{r}{1-m^2}}\sqrt{((1+m) t)^2 -w^2}\; dwdr\\
&-C\int^{(1+m) t}_{0}rf(r)\int^{(1+m) t}_{\frac{r}{1-m^2}}\sqrt{((1+m) t)^2 -w^2}\ln{\Big(\frac{w}{(1-m^2)^2}-\frac{r}{1-m^2}\Big)}\;dwdr\\
&\geq C\int^{(1+m) t}_{0}rf(r)\ln{\frac{1}{r}}\int^{(1+m) t}_{\frac{r}{1-m^2}}\sqrt{((1+m) t)^2 -w^2} \;dwdr\\
&\geq C\int^{(1+m) t}_{0}rf(r)\ln{\frac{1}{r}}dr.
\end{split}\end{equation*}
Here, we use the inequality $\sqrt{((1+m) t)^2 -w^2}\ln{\left(\frac{w}{(1-m^2)^2}-\frac{r}{1-m^2}\right)}<0$ for small $t$. Therefore, we finish the proof of necessity.\\

{\bf (proof of sufficiency) :} let $X(t,x)=\int^{t}_{0}\int_{\mathds{R}^2}G(t-s,x-y,m)F(ds,dy)$, for $t>0$ and $x\in\mathds{R}^2$. It is enough to show that $E[|X(t,x)|^2]<\infty$, if $\int_{0^{+}}rf(r)\ln{\frac{1}{r}}dr<\infty$. By definition of $X(t,x)$ in Theorem \ref{th1}, we obtain
\begin{equation}\begin{split}\label{ghighi}
E[|X(t,x)|^2]=2 \int_{\cup^{3}_{i=1}D_i(y,z)}f(\|y-z\|)\int^{t}_{\rho(y)-\frac{m y_1}{1-m^2}}G(\tilde{s},y,m)G(\tilde{s},z,m)\;d\tilde{s}dzdy.
\end{split}\end{equation}
Case 1 ($z_1<y_1$) : by setting $s=\tilde{s}+\frac{m z_1}{1-m^2}$, the expectation $E[|X(t,x)|^2]$ is bounded by
\begin{equation}\begin{split}\label{abe001}
&\int_{D_1(y,z)}f(\|y-z\|)\int^{t+\frac{m z_1}{1-m^2}}_{\rho(y)-\frac{m (y_1-z_1)}{1-m^2}}\left(s^2-\rho^2(z)\right)^{-1/2}\left(\left(s+\frac{m (y_1-z_1)}{1-m^2}\right)^2-\rho^2(y)\right)^{-1/2}dsdzdy.
\end{split}\end{equation}
Set $\epsilon=\frac{m(y_1-z_1)}{1-m^2}$. It satisfies the assumption of Lemma \ref{lem0}, since $\epsilon<\rho(y)-\rho(z)$ and $\rho(z)<\rho(y)$. By Lemma \ref{lem0}, the integral (\ref{abe001}) is bounded by
\begin{equation}\begin{split}\label{eqsu1}
&\int_{\tilde{D}_1(y,z)}f(\|y-z\|)\int^{t+\frac{m y_1}{1-m^2}}_{\rho(y)}\left(s^2-\left(\rho(z)+\frac{m(y_1-z_1)}{1-m^1}\right)^2\right)^{-1/2}\left(s^2-\rho^2(y)\right)^{-1/2}dsdzdy\\
&=\int^{\frac{t}{1-m}}_{0}\int_{\rho(y)<s}\int_{A_2}f(\|y-z\|)\left(s^2-\left(\rho(z)+\frac{m(y_1-z_1)}{1-m^1}\right)^2\right)^{-1/2}\left(s^2-\rho^2(y)\right)^{-1/2}dzdyds,
\end{split}\end{equation}
where $A_2=\{z|\;0<\rho(z)-\frac{mz_1}{1-m^2}<\rho(y)-\frac{my_1}{1-m^2}\}\cap\{z|\;y_1>z_1\}$. Set $A_{2,1}=\{z|\;0<\rho(z)-\frac{mz_1}{1-m^2}<\rho(y)-\frac{my_1}{1-m^2}\}\cap\{z|\;0<y_1-z_1<\tilde{r}_0\}$ and $A_{2,2}=\{z|\;0<\rho(z)-\frac{mz_1}{1-m^2}<\rho(y)-\frac{my_1}{1-m^2}\}\cap\{z|\;\tilde{r}_0<y_1-z_1\}$, where $\tilde{r}_0>0$. Then the right-hand side of (\ref{eqsu1}) can be rewritten as follows:
\begin{equation}\begin{split}\label{eq1234}
&\int^{\frac{t}{1-m}}_{0}\int_{\rho(y)<s}\int_{A_{2,1}} \cdot \;dzdyds+\int^{\frac{t}{1-m}}_{0}\int_{\rho(y)<s}\int_{A_{2,2}} \cdot \;dzdyds.
\end{split}\end{equation}
Here the integrand is omitted for the convenience. Now, we choose the constant $\tilde{r}_0$ which satisfies $\frac{s^2-\rho^2(z)}{s^2-\left(\rho(z)+\frac{m(y_1-z_1)}{1-m^2}\right)^2}<4$ for all $z\in \{z|\;0<y_1-z_1<\tilde{r}_0\}$. Then the first integral of (\ref{eq1234}) is bounded by
\begin{equation*}\begin{split}
\int_{D_1(y,z)}f(\|y-z\|)\int^{t+\frac{m y_1}{1-m^2}}_{\rho(y)}\left(s^2-\rho^2(z)\right)^{-1/2}\left(s^2-\rho^2(y)\right)^{-1/2}dsdzdy.
\end{split}\end{equation*}
Since $f$ is positive and continuous in $\mathds{R}_{+}$, there exist positive constants, $C_1=\mathop{\max}_{x\in[\tilde{r}_0,\frac{2t}{1-m}]}\{f(x)\}$ and $C_2=\mathop{\min}_{x\in[\tilde{r}_0,\frac{2t}{1-m}]}\{f(x)\}$.
We define two curves in $\mathds{R}^2$, $L_1=\{z|\;\rho(z)+\frac{m (y_1-z_1)}{1-m^2}=\alpha\}$ and $L_2=\{z|\;\rho(z)=\alpha\}$ for $\alpha>0$. Then, by Fubini's theorem, the second integral of (\ref{eq1234}) is bounded by
\begin{equation}\begin{split}\label{abe22}
&\int^{\frac{t}{1-m}}_{0}\int_{\rho(y)<s}\int^{\rho(y)}_{\frac{m\tilde{r}_0}{1-m^2}}\int_{\tilde{L}_{1}}f(\|y-z\|)\left(s^2-\alpha^2\right)^{-1/2}\left(s^2-\rho^2(y)\right)^{-1/2}d\mathcal{A} d\alpha dyds
\end{split}\end{equation}
where $\tilde{L}_{i}=L_i\cap A_{2,2}$ and $\mathcal{A}(L)$ is an arch length of curve $L$. Note that if $z\in \{z|\;\tilde{r}_0<y_1-z_1\}$ then for any fixed $y$ and $\alpha\in[\frac{m\tilde{r}_0}{1-m^2},\rho(y)]$, arch lengths of two curves, $\tilde{L}_1$ and $\tilde{L}_2$ satisfy that for some positive constants C, $\mathcal{A}(\tilde{L}_1)\leq C \mathcal{A}(\tilde{L}_2)$. Therefore, (\ref{abe22}) is bounded by
\begin{equation*}\begin{split}\label{bb120}
&\int^{\frac{t}{1-m}}_{0}\int_{\rho(y)<s}\int^{\rho(y)}_{\frac{m\tilde{r}_0}{1-m^2}}C_1\left(s^2-\rho^2(y)\right)^{-1/2}\left(s^2-\alpha^2\right)^{-1/2}\int_{\tilde{L}_{1}}d\mathcal{A} d\alpha dyds\\
&\leq C_1 \int^{\frac{t}{1-m}}_{0}\int_{\rho(y)<s}\int^{\rho(y)}_{\frac{m\tilde{r}_0}{1-m^2}}\left(s^2-\rho^2(y)\right)^{-1/2}C\int_{\tilde{L}_{2}}\left(s^2-\alpha^2\right)^{-1/2}d\mathcal{A} d\alpha dyds\\
&\leq C\int^{\frac{t}{1-m}}_{0}\int_{\rho(y)<s}\int^{\rho(y)}_{\frac{m\tilde{r}_0}{1-m^2}}\left(s^2-\rho^2(y)\right)^{-1/2}\int_{\tilde{L}_{2}}\left(s^2-\rho^2(z)\right)^{-1/2}d\mathcal{A} d\alpha dyds\\
&\leq \frac{C}{C_2}\int^{\frac{t}{1-m}}_{0}\int_{\rho(y)<s}\int_{A_{2,3}}f(\|y-z\|)\left(s^2-\rho^2(z)\right)^{-1/2}\left(s^2-\rho^2(y)\right)^{-1/2}dzdyds,
\end{split}\end{equation*}
where $A_{2,3}=\{z|\;0<\rho(z)-\frac{mz_1}{1-m^2}, \frac{m\tilde{r}_0}{1-m^2}<\rho(z)<\rho(y)\}\cap\{z|\;\tilde{r}_0<y_1-z_1\}$. Note that $\{z|\; \frac{m\tilde{r}_0}{1-m^2}<\rho(z)<\rho(y)\}=\{z|\; \frac{m\tilde{r}_0-m(y_1-z_1)}{1-m^2}<\rho(z)-\frac{m(y_1-z_1)}{1-m^2}<\rho(y)-\frac{m(y_1-z_1)}{1-m^2}\}\subset\{z|\; \rho(z)-\frac{m(y_1-z_1)}{1-m^2}<\rho(y)\}$, if $y_1-z_1>0$. Therefore, we have $A_{2,3}\subset A_{2,2}$. By Fubini's theorem, the second integral of (\ref{eq1234}) has the same result as the first integral of (\ref{eq1234}).

Consequently, we have an upper bound of (\ref{eq1234}),
\begin{equation}\begin{split}\label{eq1234567}
\int_{D_1(y,z)}f(\|y-z\|)\int^{t+\frac{m y_1}{1-m^2}}_{\rho(y)}\left(s^2-\rho^2(z)\right)^{-1/2}\left(s^2-\rho^2(y)\right)^{-1/2}dsdzdy.
\end{split}\end{equation}
By setting $\bar{s}=s^2$ and (\ref{ttt}), we have an upper bound of (\ref{eq1234}),
\begin{equation*}\begin{split}
&\int_{D_1(y,z)}f(\|y-z\|)\int^{(t+\frac{m y_1}{1-m^2})^2}_{\rho^2(y)}\frac{1}{2\bar{s}}\left(\bar{s}^2-\left(\rho^2(y)+\rho^2(z)\right)\bar{s}+\rho^2(y)\rho^2(z)\right)^{-1/2}d\bar{s}dzdy\\
&\leq\int_{D_1(y,z)}\frac{f(\|y-z\|)}{2\rho(y)}\left(\int^{\left(t+\frac{m y_1}{1-m^2}\right)^2}_{\rho^2(y)}\left(\bar{s}^2-\left(\rho^2(y)+\rho^2(z)\right)\bar{s}+\rho^2(y)\rho^2(z)\right)^{-1/2}d\bar{s}\right)dzdy\\
&\leq\int_{D_1(y,z)}\frac{f(\|y-z\|)}{2\rho(y)}\left(\ln\Big(\sqrt{\left(\frac{t}{1-m}\right)^2-\rho^2(y)}+\sqrt{\left(\frac{t}{1-m}\right)^2-\rho^2(z)}\Big)^2-\ln\Big(\rho^2(y)-\rho^2(z)\Big)\right)dzdy\\
&\leq\int_{D_1(y,z)}\frac{f(\|y-z\|)}{2\rho(y)}\left(\ln \frac{4t^2}{(1-m)^2}-\ln\left(\rho^2(y)-\rho^2(z)\right)\right)dzdy.
\end{split}\end{equation*}
By Lemma \ref{lem1}, we have an upper bound of the integral (\ref{eq1234}),
\begin{equation}\begin{split}
&\int^{2\sqrt{\frac{1+m}{1-m}}t}_{0}rf(r)\int^{2(1+m)t}_{r\sqrt{1-m^2}}\int^{\pi/2}_{\eta^{-1}\left(\frac{w}{(1-m^2)r}\right)}\left(\ln{\frac{4t^2}{(1-m)^2}}-\ln{\Big(\frac{wr\eta(\theta)}{1-m^2}-r^2\eta^2(\theta)\Big)}\right)\left(\ln{(4(1+m)t)}-\ln{w}\right)d\theta dwdr\\
&=\int^{2\sqrt{\frac{1+m}{1-m}}t}_{0}rf(r)\ln{\frac{1}{r}}\int^{2(1+m)t}_{r\sqrt{1-m^2}}\int^{\pi/2}_{\eta^{-1}\left(\frac{w}{(1-m^2)r}\right)}\left(\ln{(4(1+m)t)}-\ln{w}\right)d\theta dwdr\\
&+\int^{2\sqrt{\frac{1+m}{1-m}}t}_{0}rf(r)\int^{2(1+m)t}_{r\sqrt{1-m^2}}\int^{\pi/2}_{\eta^{-1}\left(\frac{w}{(1-m^2)r}\right)}\left(\ln{\frac{4t^2}{(1-m)^2}}-\ln{(\frac{w\eta(\theta)}{1-m^2}-r\eta^2(\theta))}\right)\left(\ln{(4(1+m)t)}-\ln{w}\right)d\theta dwdr.
\end{split}\end{equation}
By the same argument as in \cite{DF1998} (using the fact $\int-\ln w dw <1$, refer to pages 197-199 of \cite{DF1998}), both inner integrals,
\begin{equation*}\begin{split}
&\int^{2(1+m)t}_{r\sqrt{1-m^2}}\int^{\pi/2}_{\eta^{-1}\left(\frac{w}{(1-m^2)r}\right)}\Big(\ln{(4(1+m)t)}-\ln{w}\Big)\;d\theta dw,\\
&\int^{2(1+m)t}_{r\sqrt{1-m^2}}\int^{\pi/2}_{\eta^{-1}\left(\frac{w}{(1-m^2)r}\right)}\Big(\ln{\frac{4t^2}{(1-m)^2}}-\ln{(\frac{w\eta(\theta)}{1-m^2}-r\eta^2(\theta))}\Big)\Big(\ln{(4(1+m)t)}-\ln{w}\Big)\;d\theta dw
\end{split}\end{equation*}
are finite. Therefore, we conclude that
\begin{equation*}\begin{split}
E[|X(t,x)|^2]<C_3 \int^{2\sqrt{\frac{1+m}{1-m}}t}_{0}rf(r)\left(\ln{\frac{1}{r}}+C_4\right)dr,
\end{split}\end{equation*}
where $C_{3}$ and $C_4$ are positive constants depending on $m$.\\

On the other hand, by change of variable $s=\tilde{s}+\frac{my_1}{1-m^2}$, we obtain
\begin{equation}\begin{split}\label{eq12222a}
&\int_{\cup_{i=2,3}D_i(y,z)}f(\|y-z\|)\int^{t}_{\rho(y)-\frac{m y_1}{1-m^2}}G(\tilde{s},y,m)G(\tilde{s},y,m)\;d\tilde{s}dzdy\\
&=\int_{\cup_{i=2,3}D_i(y,z)}f(\|y-z\|)\int^{t+\frac{m y_1}{1-m^2}}_{\rho(y)}\left(s^2-\rho^2(y)\right)^{-1/2}\left(\Big(s+\frac{m (z_1-y_1)}{1-m^2}\Big)^2-\rho^2(z)\right)^{-1/2}dsdzdy.
\end{split}\end{equation}
Case 2 ($z_1>y_1$ and $\rho(y)>\rho(z)$) : we easily obtain an upper bound of (\ref{eq12222a}),
\begin{equation*}\begin{split}
\int_{D_2(y,z)}f(\|y-z\|)\int^{t+\frac{m y_1}{1-m^2}}_{\rho(y)}\left(s^2-\rho^2(y)\right)^{-1/2}\left(s^2-\rho^2(z)\right)^{-1/2}dsdzdy.
\end{split}\end{equation*}
Case 3 ($z_1>y_1$ and $\rho(y)<\rho(z)$) : let $\tilde{s}=s+\epsilon$, $\epsilon=\rho(z)-\rho(y)$. By $0<\frac{m (z_1-y_1)}{1-m^2}-\epsilon$, we have an upper bound of (\ref{eq12222a}),
\begin{equation}\begin{split}\label{abe331}
&\int_{D_3(y,z)}f(\|y-z\|)\int^{t+\frac{m y_1}{1-m^2}+\epsilon}_{\rho(z)}\left((s-\epsilon)^2-\rho^2(y)\right)^{-1/2}\left(s^2-\Big(\rho(z)-\Big(\frac{m (z_1-y_1)}{1-m^2}-\epsilon\Big)\Big)^2\right)^{-1/2}dsdzdy.
\end{split}\end{equation}
By defining $\beta(y,z):=\frac{m (z_1-y_1)}{1-m^2}-2\epsilon$, we obtain $\rho(z)-\Big(\frac{m (z_1-y_1)}{1-m^2}-\epsilon\Big)=\rho(y)-\beta(y,z)$. Note that $\beta(y,z)$ is not always positive. Therefore, we have to check the cases, $\beta(y,z)< 0$ and $\beta(y,z)>0$ separately. If $\beta(y,z)> 0$, then $\rho(z)-(\frac{m (z_1-y_1)}{1-m^2}-\epsilon)=\rho(y)-\beta(y,z)< \rho(y)$. If $\beta(y,z)< 0$, we choose a constant $\tilde{r}_0>0$ such that $\frac{s^2-\rho^2(y)}{s^2-(\rho(y)-\beta(y,z))^2}<4$ for all $y\in \{y|\; -\tilde{r}_0<\beta(y,z)\}$ and use a argument from (\ref{eq1234}) to (\ref{eq1234567}) in the case 1. Then we conclude that (\ref{abe331}) is bounded by
\begin{equation}\begin{split}\label{upp12}
&\int_{D_3(y,z)}f(\|y-z\|)\int^{t+\frac{m y_1}{1-m^2}+\epsilon}_{\rho(z)}\left((s-\epsilon)^2-\rho^2(y)\right)^{-1/2}\left(s^2-\rho^2(y)\right)^{-1/2}dsdzdy.
\end{split}\end{equation}
On the other hand,
\begin{equation*}\begin{split}
(s-\epsilon)^2 - \rho^2(y)=&\left(1-\frac{2\epsilon}{s+\rho(y)+\epsilon}\right)\left(s^2-\left(\epsilon+\rho(y)\right)^2\right)\\
\geq&\frac{\rho(y)}{\rho(z)}\left(s^2-\left(\epsilon+\rho(y)\right)^2\right).
\end{split}\end{equation*}
For $(y,z)\in{D_3(y,z)}$, we obtain $(1-m)\rho(z)\leq(1+m)\rho(y)$. This leads to $((s-\epsilon)^2-\rho^2(y))^{-1/2}\leq \sqrt{\frac{1+m}{1-m}}(s^2-\rho^2(z))^{-1/2}$. Hence, we have an upper bound of (\ref{upp12})
\begin{equation*}\begin{split}
\int_{D_3(y,z)}f(\|y-z\|)\int^{t+\frac{m z_1}{1-m^2}}_{\rho(z)}\left(s^2-\rho^2(z)\right)^{-1/2}\left(s^2-\rho^2(y)\right)^{-1/2}dsdzdy.
\end{split}\end{equation*}
By Lemma \ref{lem1} and arguments (page 8) in the case 1, we conclude that
\begin{equation}\begin{split}\label{l2l2}
E[|X(t,x)|^2]<C_3 \int^{2\sqrt{\frac{1+m}{1-m}}t}_{0}rf(r)\left(\ln{\frac{1}{r}}+C_4\right)dr.
\end{split}\end{equation}
\end{proof}

The result (\ref{l2l2}) is not sufficient to prove joint measurability of $X(t,x)$ as in \cite{DF1998}. To guarantee joint measurability, we will provide the continuity in $L^2$ in Section 3. Moreover, we will establish H\"older continuity of $X(t,x)$ by Theorem \ref{th2} in Section 3.

\section{H\"older Continuity}
In this section, we study H\"older continuity of $X(t,x)$ by providing the modulus of continuity for $X(t,x)$ which implies that $X(t,x)$ is $L^2$-continuous.
\begin{lem}\label{lem4}
Suppose $0<c<b$ and $a<b<t^2$. Then
\begin{equation}\begin{split}\label{lemin5}
\int^{t}_{\sqrt{b}}&\Big((s^2-b)^{-1/2}-(s^2-a)^{-1/2}\Big)(s^2-c)^{-1/2}ds\leq \frac{1}{2\sqrt{b}}\ln{\Big(1+\frac{b-a}{b-c}+2\sqrt{\frac{b-a}{b-c}}\Big)}.
\end{split}\end{equation}
\end{lem}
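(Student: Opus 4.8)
The plan is to collapse the left-hand side to two elementary integrals via a single substitution that removes the singularity at $s=\sqrt b$, and then to let the endpoint-dependent terms telescope against the target logarithm.

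First I would write the left-hand side as $I_1-I_2$, where $I_1=\int_{\sqrt b}^t (s^2-b)^{-1/2}(s^2-c)^{-1/2}\,ds$ and $I_2=\int_{\sqrt b}^t (s^2-a)^{-1/2}(s^2-c)^{-1/2}\,ds$; since $a<b$ the bracketed difference is nonnegative, so this is a genuine upper-bound problem. Substituting $v=\sqrt{s^2-b}$ (so $s^2=v^2+b$, $ds=v(v^2+b)^{-1/2}\,dv$, and $v$ ranges over $[0,V]$ with $V=\sqrt{t^2-b}$) makes the factor $(s^2-b)^{-1/2}=v^{-1}$ cancel the $v$ in the Jacobian, desingularizing the integral. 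Writing $p=b-c>0$ and $q=b-a>0$ this yields
\[
I_1-I_2=\int_0^V (v^2+p)^{-1/2}(v^2+b)^{-1/2}\Big(1-\frac{v}{(v^2+q)^{1/2}}\Big)\,dv.
\]

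The key algebraic observation is that $1-v(v^2+q)^{-1/2}=q\big[(v^2+q)^{1/2}\big((v^2+q)^{1/2}+v\big)\big]^{-1}\ge 0$, so the integrand is positive and I may use $(v^2+b)^{-1/2}\le b^{-1/2}$; this single estimate is what produces the prefactor $1/\sqrt b$. What remains are two elementary integrals, $\int_0^V (v^2+p)^{-1/2}\,dv=\ln\frac{V+\sqrt{V^2+p}}{\sqrt p}$ and, after the substitution $w=v^2$, $\int_0^V v(v^2+p)^{-1/2}(v^2+q)^{-1/2}\,dv=\ln\frac{\sqrt{V^2+p}+\sqrt{V^2+q}}{\sqrt p+\sqrt q}$, so that
\[
I_1-I_2\le \frac{1}{\sqrt b}\,\ln\frac{(V+\sqrt{V^2+p})(\sqrt p+\sqrt q)}{\sqrt p\,(\sqrt{V^2+p}+\sqrt{V^2+q})}.
\]

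Finally I would rewrite the target right-hand side as $\frac{1}{2\sqrt b}\ln\big(1+\sqrt{q/p}\big)^2=\frac{1}{\sqrt b}\ln\frac{\sqrt p+\sqrt q}{\sqrt p}$, reducing everything to the inequality $V+\sqrt{V^2+p}\le \sqrt{V^2+p}+\sqrt{V^2+q}$, i.e.\ $V\le\sqrt{V^2+q}$, which holds because $q=b-a>0$ (and incidentally shows the bound is asymptotically sharp as $t\to\infty$). I expect the only real difficulty to be spotting the substitution $v=\sqrt{s^2-b}$, which simultaneously removes the singularity and makes both pieces elementary; after that the argument is routine integration together with the one inequality $V\le\sqrt{V^2+q}$.
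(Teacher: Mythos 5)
Your proof is correct, and it is essentially the paper's own argument written in shifted coordinates: your substitution $v=\sqrt{s^2-b}$ is the paper's $\tilde{s}=s^2$ composed with the shift $\tilde{s}=v^2+b$, your estimate $(v^2+b)^{-1/2}\le b^{-1/2}$ is exactly the paper's bound $(2\sqrt{\tilde{s}})^{-1}\le(2\sqrt{b})^{-1}$ that produces the prefactor, your two elementary integrals are the paper's two explicit antiderivatives $\ln\big(\sqrt{\tilde{s}-b}+\sqrt{\tilde{s}-c}\big)^2$ and $\ln\big(\sqrt{\tilde{s}-a}+\sqrt{\tilde{s}-c}\big)^2$ evaluated between $b$ and $t^2$, and your closing inequality $V\le\sqrt{V^2+q}$ is precisely the paper's observation that the $t$-dependent term $\ln\Big(\frac{\sqrt{t^2-b}+\sqrt{t^2-c}}{\sqrt{t^2-a}+\sqrt{t^2-c}}\Big)^2$ is nonpositive. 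There are no gaps; the two proofs coincide step for step under this change of variables.
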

\begin{proof} See the proof in appendix.
\end{proof}

\begin{lem}\label{lem5}
Suppose $E[|X(t,x)|^2]<\infty$ for $t\leq t_0$. Then there exist positive constants $C_1$ and $C_2$ depending on the Mach number $m$ such that for small h, $0<h<t_0$, $0<t<t_0$ and $x^{(1)},x^{(2)}\in\mathds{R}^2$ with $\|x^{(1)}-x^{(2)}\|=h$, two mean square norms $E[|X(t,x)-X(t+h,x)|^2]$ and $E[|X(t,x^{(1)})-X(t,x^{(2)})|^2]$ are bounded by
\begin{equation}\label{boundeq1}
C_1\int^{2\sqrt{\frac{1+m}{1-m}}t_0}_{0}rf(r)\int^{2(1+m)t_0}_{r\sqrt{1-m^2}}\int^{\pi/2}_{\eta^{-1}\left(\frac{w}{(1-m^2)r}\right)}\ln{\Big(1+\frac{C_2 t_0 h^{1/2}}{\frac{wr\eta(\theta)}{1-m^2}-r^2\eta^2(\theta)}\Big)}\Big(\ln{(4(1+m)t_0)}-\ln{w}\Big)\;d\theta dwdr.
\end{equation}
\end{lem}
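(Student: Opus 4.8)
The plan is to express each mean-square increment through the covariance functional of the stochastic integral, to reduce the resulting inner time integral to the single-difference shape controlled by Lemma~\ref{lem4}, and to feed the outcome into Lemma~\ref{lem1}. For the temporal increment I would start from the representation
\[
E[|X(t,x)-X(t+h,x)|^2]=\int_{0}^{t+h}\int_{\mathds{R}^2}\int_{\mathds{R}^2}\Delta(s,y)\Delta(s,z)\,f(\|y-z\|)\,dy\,dz\,ds,
\]
with $\Delta(s,y)=G(t+h-s,x-y,m)-G(t-s,x-y,m)\mathbf{1}_{\{s<t\}}$, and split $\int_{0}^{t+h}=\int_{t}^{t+h}+\int_{0}^{t}$. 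On the strip $(t,t+h)$ only the $G(t+h-s,\cdot)$ term survives, producing a small-time contribution of the kind already estimated in Theorem~\ref{th1} (with $t$ replaced by $h<t_0$), which is dominated by (\ref{boundeq1}); the genuine work is on $(0,t)$, where $\Delta$ is the difference of two Green functions sharing a common singular surface.

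The key reduction is the handling of the product of two kernel differences on $(0,t)$. By the symmetry $y\leftrightarrow z$ I would restrict to the region $\rho(y)\ge\rho(z)$ (the mirror region following by interchange), which together with the sign conditions on $y_1-z_1$ recovers the domains $D_1,D_2,D_3$ of Lemma~\ref{lem1}. There the inner time integral runs from the larger singularity $\rho(y)$, so throughout it both indicators attached to the $z$-kernel are active and $|G(t+h-s,x-z,m)-G(t-s,x-z,m)|\le G(t-s,x-z,m)$. Bounding the $z$-difference by this plain kernel while keeping the $y$-difference intact turns the product of two differences into the one-difference-times-one-kernel form required by Lemma~\ref{lem4}. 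After the substitution $\bar{s}=(t\text{ or }t+h)-s+\frac{m(x_1-y_1)}{1-m^2}$, aligning the two time shifts by Lemma~\ref{lem0} exactly as in the three cases of Theorem~\ref{th1}, and after replacing the time-shifted kernel $((\bar{s}+h)^2-\rho^2(y))^{-1/2}$ from below by $(\bar{s}^2-a)^{-1/2}$ with a constant $a<b:=\rho^2(y)$, the $\bar{s}$-integral becomes precisely the left-hand side of (\ref{lemin5}) with $c=\rho^2(z)$.

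Lemma~\ref{lem4} then bounds the time integral by $\frac{1}{2\rho(y)}\ln\!\big(1+\frac{b-a}{b-c}+2\sqrt{\frac{b-a}{b-c}}\big)$. Using $\bar{s}\le t_0/(1-m)$ from (\ref{ttt}) gives $b-a\le Ct_0 h$, and since $b-c=\rho^2(y)-\rho^2(z)\le\rho^2(y)\le(t_0/(1-m))^2$ one has $\sqrt{b-c}\le Ct_0$; together with $h<t_0\le 1$ this makes both $\frac{b-a}{b-c}$ and $2\sqrt{\frac{b-a}{b-c}}$ at most $\frac{C_2 t_0 h^{1/2}}{b-c}$, so the logarithm is $\le\ln\!\big(1+\frac{C_2 t_0 h^{1/2}}{\rho^2(y)-\rho^2(z)}\big)$. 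This is exactly $g(\rho^2(y)-\rho^2(z))$ for $g(u)=\ln(1+\frac{C_2 t_0 h^{1/2}}{u})$, which is positive and non-increasing, so Lemma~\ref{lem1} converts the surviving spatial integral $\int_{D_i}\frac{f(\|y-z\|)}{\rho(y)}\,g(\rho^2(y)-\rho^2(z))\,dy\,dz$ into the stated bound (\ref{boundeq1}).

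For the spatial increment the same scheme applies, the only change being that the difference now sits in the singular surfaces themselves: $G(t-s,x^{(1)}-y,m)-G(t-s,x^{(2)}-y,m)$ yields, after the change of variables, the exact form $(\bar{s}^2-b)^{-1/2}-(\bar{s}^2-a)^{-1/2}$ with $b=\rho^2(x^{(1)}-y)$, $a=\rho^2(x^{(2)}-y)$, so Lemma~\ref{lem4} applies with no auxiliary lower bound, and the Lipschitz estimate $|b-a|\le C\|x^{(1)}-x^{(2)}\|\,t_0=Cht_0$ again produces (\ref{boundeq1}). The main obstacle is the temporal case's product of two kernel differences: Cauchy--Schwarz or the elementary $2ab\le a^2+b^2$ would break the coupling through $f(\|y-z\|)$, since the one-variable integral $\int f(\|y-z\|)\,dz$ need not converge; the decomposition into $\{\rho(y)\ge\rho(z)\}$ and its mirror, together with the pointwise domination of the ``far'' difference by a single plain Green function, is therefore essential. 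Obtaining the sharp exponent $h^{1/2}$ rather than $h$ is precisely the square-root loss built into Lemma~\ref{lem4} and reflects the characteristic behaviour of the two-dimensional wave kernel.
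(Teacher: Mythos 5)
Your treatment of the common-support part is essentially the paper's own route: bound the $z$-difference by a plain kernel, align the convective shifts via Lemma~\ref{lem0}, convert the $y$-difference into the single-difference form of Lemma~\ref{lem4} with $b-a\leq Ct_0h$, and feed the resulting logarithm into Lemma~\ref{lem1}. The genuine gap is in how you dispose of the rest of the $(0,t)$ integral. Your claim that ``the inner time integral runs from the larger singularity'' is false: in the variable $\bar s=t-s$, the difference $\Delta(s,y)$ is supported down to $\rho(y)-\frac{my_1}{1-m^2}-h$, not $\rho(y)-\frac{my_1}{1-m^2}$, because on the band $\bar s\in\big(\rho(y)-\frac{my_1}{1-m^2}-h,\;\rho(y)-\frac{my_1}{1-m^2}\big)$ only the shifted kernel is alive and $\Delta(s,y)=G(t+h-s,x-y,m)>0$ with no cancellation. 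Splitting the $(0,t)$ contribution into (common$\times$common) $+$ $2\,$(common$\times$annulus) $+$ (annulus$\times$annulus), the cross terms are $\leq 0$ (a sign fact you would still need to record, since $\Delta$ changes sign), but the annulus$\times$annulus term is positive and singular: there each kernel is of order $(\rho(\cdot)\,h)^{-1/2}$ over a time band of width at most $h$, and it contributes at the same order $h^{1/2}$ as the term you keep, so it cannot be discarded. This is exactly why the paper decomposes the stochastic integral itself as $Y_1+Y_2+Y_3$ (so that each variance can be bounded separately) and devotes a separate computation to $E[|Y_2|^2]$: Fubini, then an explicit integration of $G(s,y,m)G(s,z,m)$ over the short interval from $\rho(y)-\frac{my_1}{1-m^2}$ to $\rho(z)-\frac{mz_1}{1-m^2}+h$, yielding the bound $\frac{1}{2\rho(y)}\ln\big(1+\frac{Ct_0h^{1/2}}{\rho^2(y)-\rho^2(z)}\big)$ before Lemma~\ref{lem1} is applied. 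Your proposal never produces this estimate.

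The same omission recurs in the spatial increment: you estimate only the common-support difference (the paper's $Z_1$), but the regions where exactly one of $G(t-s,x^{(1)}-y,m)$, $G(t-s,x^{(2)}-y,m)$ is supported (the paper's $Z_2$ and $Z_3$) again carry uncancelled single kernels near their singular surfaces; the paper controls them by comparing $\|x^{(1)}-x^{(2)}\|=h$ with the displacement of the singular surface and reducing to the $Y_2$- and $Y_3$-type computations. A smaller, fixable glossing: in your Case-3 alignment (retarded times ordered oppositely to $\rho(y)$, $\rho(z)$), Lemma~\ref{lem0} alone does not suffice once the factor you keep is a difference rather than a single kernel; the paper needs the additional ratio bound $|\Phi_1\Phi_2|\leq C$ to transfer the difference structure to the shifted singularity. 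With the annulus terms supplied and that ratio argument added, your argument would coincide with the paper's.
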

\begin{proof}
First, we consider the time increment case, $E[|X(t+h,x)-X(t,x)|^2]$. By change of variable $y=x-\tilde{y}$,
\begin{equation}\begin{split}\label{y2ky2k}
X&(t,x)-X(t+h,x)\\
=&\int^{t}_{0}\int_{\mathds{R}^2}G(t-\tilde{s},x-\tilde{y},m)F(d\tilde{s},d\tilde{y})-\int^{t+h}_{0}\int_{\mathds{R}^2}G(t-\tilde{s}+h,x-\tilde{y},m)F(d\tilde{s},d\tilde{y})\\
=&\int^{t}_{0}\int_{0<\rho(y)-\frac{my_1}{1-m^2}<t-\tilde{s}}G(t-\tilde{s},y,m)-G(t-\tilde{s}+h,y,m)F(d\tilde{s},dy)\\
&-\int^{t}_{0}\int_{0<t-\tilde{s}<\rho(y)-\frac{my_1}{1-m^2}<t-\tilde{s}+h}G(t-\tilde{s}+h,y,m)F(d\tilde{s},dy)\\
&-\int^{t+h}_{t}\int_{0<\rho(y)-\frac{my_1}{1-m^2}<t-\tilde{s}+h}G(t-\tilde{s}+h,y,m)F(d\tilde{s},dy)\\
=:&Y_1+Y_2+Y_3
\end{split}\end{equation}
Therefore, it is enough to show that $E[|Y_{i}|^2]$, $i=1,2,3$, are bounded by (\ref{boundeq1}). Now, we consider $E[|Y_{1}|^2]$. By change of variable $s=t-\tilde{s}$,
\begin{equation}\begin{split}\label{y1}
E[|Y_1|^2]\leq2\int_{\bigcup^{3}_{i=1}D_i}f(\|y-z\|)\int^{t_0}_{\rho(y)-\frac{my_1}{1-m^2}}G^{(h)}(s,y,m)G^{(h)}(s,z,m)\;dsdydz,
\end{split}\end{equation}
where $G^{(h)}(s,y,m)=G(s,y,m)-G(s+h,y,m)$.

As in the proof of Theorem \ref{th1}, we will use Lemma \ref{lem0} and changes of variable for each cases.\\
Case 1 ($z_1<y_1$) : let $\tau=s+\frac{mz_1}{1-m^2}$. By Lemma \ref{lem0} and eliminating the term $G(s+h,z,m)$ in $(\ref{y1})$, we have an upper bound of $E[|Y_{1}|^2]$,
\begin{equation*}\begin{split}
\int_{D_1(y,z)}f(\|y-z\|)\int^{t_0+\frac{my_1}{1-m^2}}_{\rho(y)}\frac{H(\tau,y)}{\sqrt{\tau^2-(\rho(z)+\frac{m(y_1-z_1)}{1-m^2})^2}}\;d\tau dydz,
\end{split}\end{equation*}
where $H(\tau,y)=\frac{1}{\sqrt{\tau^2-\rho^2(y)}}-\frac{1}{\sqrt{(\tau+h)^2-\rho^2(y)}}>0$. Therefore, $E[|Y_{1}|^2]$ is bounded by
\begin{equation}\begin{split}\label{y111}
&\int_{D_1(y,z)}f(\|y-z\|)\int^{t_0+\frac{my_1}{1-m^2}}_{\rho(y)}\frac{H(\tau,y)}{\sqrt{\tau^2-\rho^2(z)}}\;d\tau dydz.
\end{split}\end{equation}

Suppose $z_1>y_1$. For the case 2 and 3, we define $\tau=s+\frac{my_1}{1-m^2}$. By eliminating $((\tau+h+\frac{m(z_1-y_1)}{1-m^2})^2-\rho^2(z))^{-1/2}$ in $G^{(h)}(\tau-\frac{my_1}{1-m^2},z,m)$ of $(\ref{y1})$, the right-hand side integral of (\ref{y1}) is bounded by
\begin{equation}\begin{split}\label{y11}
&\int_{\cup_{i=2,3}D_i(y,z)}f(\|y-z\|)\int^{t_0+\frac{my_1}{1-m^2}}_{\rho(y)}\frac{H(\tau,y)}{\sqrt{(\tau+\frac{m(z_1-y_1)}{1-m^2})^2-\rho^2(z)}}\;d\tau dydz.
\end{split}\end{equation}

Case 2 ($z_1>y_1$ and $\rho(z)<\rho(y)$) : we can easily check that (\ref{y11}) is bounded by
\begin{equation}\begin{split}\label{y1234}
&\int_{D_2(y,z)}f(\|y-z\|)\int^{t_0+\frac{my_1}{1-m^2}}_{\rho(y)}\frac{H(\tau,y)}{\sqrt{\tau^2-\rho^2(z)}}\;d\tau dydz.
\end{split}\end{equation}

Case 3 ($z_1>y_1$ and $\rho(z)>\rho(y)$) : set $\tilde{\tau}=\tau+\epsilon$, where $\epsilon=\rho(z)-\rho(y)$ and $a=\rho(y)$. the integral (\ref{y11}) is bounded by
\begin{equation}\begin{split}\label{y0112}
\int_{D_3(y,z)}f(\|y-z\|)\int^{t_0+\frac{my_1}{1-m^2}+\epsilon}_{\rho(z)}\frac{H(\tilde{\tau}-\epsilon,y)}{\sqrt{(\tilde{\tau}+\frac{m(z_1-y_1)}{1-m^2}-\epsilon)^2-\rho^2(z)}}\;d\tilde{\tau} dydz.
\end{split}\end{equation}

By elementry calculation, we can check that
\begin{equation*}\begin{split}
&\frac{\frac{1}{\sqrt{(\tilde{\tau}-\epsilon)^2-a^2}}-\frac{1}{\sqrt{(\tilde{\tau}+h-\epsilon)^2-a^2}}}{\frac{1}{\sqrt{\tilde{\tau}^2-(a+\epsilon)^2}}-\frac{1}{\sqrt{(\tilde{\tau}+h)^2-(a+\epsilon)^2}}}=\Phi_1(\tilde{\tau},a,\epsilon,h) \Phi_2(\tilde{\tau},a,\epsilon,h),\\
&\Phi_1(\tilde{\tau},a,\epsilon,h)=\sqrt{\frac{(\tilde{\tau}+a+\epsilon)(\tilde{\tau}+h+a+\epsilon)}{(\tilde{\tau}+a-\epsilon)(\tilde{\tau}+h+a-\epsilon)}},\\
\text{and}\;\;&\Phi_2(\tilde{\tau},a,\epsilon,h)=\frac{\sqrt{(\tilde{\tau}+h-a-\epsilon)(\tilde{\tau}+h+a-\epsilon)}-\sqrt{(\tilde{\tau}-a-\epsilon)(\tilde{\tau}+a-\epsilon)}}{\sqrt{(\tilde{\tau}+h-a-\epsilon)(\tilde{\tau}+h+a+\epsilon)}-\sqrt{(\tilde{\tau}-a-\epsilon)(\tilde{\tau}+a+\epsilon)}}.
\end{split}\end{equation*}

Note that $\Phi_i$, $i=1,2$ are continuous and the domain of $\Phi_i$ is bounded by $(0,C]^4$ for some positive constant $C$. In $(\ref{y0112})$, $a$ and $\epsilon$ are positive and smaller than $\tilde{\tau}$. Suppose $(\tilde{\tau}+a-\epsilon)\rightarrow 0$. By $2a<(\tilde{\tau}+a-\epsilon)$ and $\rho(z)-\frac{mz_1}{1-m^2}<\rho(y)-\frac{my_1}{1-m^2}$, $(\tilde{\tau}+a-\epsilon)\rightarrow 0$ implies $\tilde{\tau}\rightarrow 0$ and $\mathcal{O}(\frac{a+\epsilon}{\tilde{\tau}})=\mathcal{O}(\frac{a-\epsilon}{\tilde{\tau}})=\mathcal{O}(1)$. In doing so, we obtain $|\Phi_1(\tilde{\tau},a,\epsilon,h)|<C_1$, where $C_1>0$. Similarly,
$\sqrt{(\tilde{\tau}+h-a-\epsilon)(\tilde{\tau}+h+a+\epsilon)}-\sqrt{(\tilde{\tau}-a-\epsilon)(\tilde{\tau}+a+\epsilon)}\rightarrow 0$ implies $h\rightarrow 0$. Therefore $\sqrt{(\tilde{\tau}+h-a-\epsilon)(\tilde{\tau}+h+a-\epsilon)}-\sqrt{(\tilde{\tau}-a-\epsilon)(\tilde{\tau}+a-\epsilon)}\rightarrow 0$. By the same argument as of $\Phi_1$, we have $|\Phi_2(\tilde{\tau},a,\epsilon,h)|<C_1$.

These two results imply that $$|\Phi_1(\tilde{\tau},a,\epsilon,h)\Phi_2(\tilde{\tau},a,\epsilon,h)|\leq C_1\;\;\;\;\;$$ in $(\ref{y0112})$. Therefore, $(\ref{y11})$ is bounded by
\begin{equation*}\begin{split}
&\int_{D_3(y,z)}f(\|y-z\|)\int^{t_0+\frac{mz_1}{1-m^2}}_{\rho(z)}\frac{H(\tau,z)}{\sqrt{\tau^2-(\rho(z)-\frac{m(z_1-y_1)}{1-m^2})^2}}\;d\tau dydz.
\end{split}\end{equation*}
By the same way as in Theorem \ref{th1}, we have an upper bound of $(\ref{y1})$,
\begin{equation}\begin{split}\label{y112}
&\int_{D_3(y,z)}f(\|y-z\|)\int^{t_0+\frac{mz_1}{1-m^2}}_{\rho(z)}\frac{H(\tau,z)}{\sqrt{\tau^2-\rho^2(y)}}\;d\tau dydz.
\end{split}\end{equation}

From (\ref{y111}), (\ref{y1234}), and (\ref{y112}), the right-hand side of (\ref{y1}) is bounded. Since $h$ is small enough, $H(\tau,\cdot)$ in (\ref{y111}), (\ref{y1234}) and (\ref{y112}) can be replaced by $\frac{1}{\sqrt{\tau^2-\rho^2(\cdot)}}-\frac{1}{\sqrt{\tau^2-(\rho^2(\cdot)-\frac{2t_0}{1-m}h-h^2)}}$. By applying Lemma 4 in \cite{DF1998} and Lemma \ref{lem4}, we have the following upper bound for $(\ref{y1})$,
\begin{equation*}\begin{split}
&\int_{D_1(y,z)}\frac{f(\|y-z\|)}{2\rho(y)}\ln{\Big(1+\frac{\frac{2t_0}{1-m}h+h^2}{\rho^2(y)-\rho^2(z)}+2\sqrt{\frac{\frac{2t_0}{1-m}h+h^2}{\rho^2(y)-\rho^2(z)}}\Big)}\;dzdy\\
&+\int_{D_2(y,z)}\frac{f(\|y-z\|)}{2\rho(y)}\ln{\Big(1+\frac{\frac{2t_0}{1-m}h+h^2}{\rho^2(y)-\rho^2(z)}+2\sqrt{\frac{\frac{2t_0}{1-m}h+h^2}{\rho^2(y)-\rho^2(z)}}\Big)}\;dzdy\\
&+\int_{D_3(y,z)}\frac{f(\|y-z\|)}{2\rho(z)}\ln{\Big(1+\frac{\frac{2t_0}{1-m}h+h^2}{\rho^2(z)-\rho^2(y)}+2\sqrt{\frac{\frac{2t_0}{1-m}h+h^2}{\rho^2(z)-\rho^2(y)}}+\Big)}\;dzdy.
\end{split}\end{equation*}
Since $\frac{2\frac{t_0}{1-m}h+h^2}{\rho^2(y)-\rho^2(x)}+2\sqrt{\frac{2\frac{t_0}{1-m}h+h^2}{\rho^2(y)-\rho^2(z)}}\leq\frac{C t_0 h^{1/2}}{\rho^2(y)-\rho^2(z)}$ for small $h$, Lemma \ref{lem1} implies that $E[|Y_1|^2]$ is bounded by (\ref{boundeq1}).\\

The upper bound of  $E[|Y_2|^2]$ and $E[|Y_3|^2]$ can be derived as in the case of $Y_1$. By the definition of $Y_2$ in (\ref{y2ky2k}),
\begin{equation*}\begin{split}\label{y2ky2k}
E[|Y_2|^2]&=\int^{t}_{0}\mathop{\int_{s<\rho(y)-\frac{my_1}{1-m^2}<s+h}}_{s<\rho(z)-\frac{mz_1}{1-m^2}<s+h}f(\|y-z\|)G(s+h,y,m)G(s+h,z,m)\;dydzds\\
&=\int^{t+h}_{h}\mathop{\int_{s-h<\rho(z)-\frac{mz_1}{1-m^2}<s}}_{s-h<\rho(y)-\frac{my_1}{1-m^2}<s}f(\|y-z\|)G(s,y,m)G(s,z,m)\;dydzds\\
&=2\int^{t+h}_{h}\int_{s-h<\rho(z)-\frac{mz_1}{1-m^2}<\rho(y)-\frac{my_1}{1-m^2}<s}f(\|y-z\|)G(s,y,m)G(s,z,m)\;dydzds.
\end{split}\end{equation*}
By Fubini's theorem,
\begin{equation}\begin{split}
E[|Y_2|^2]&=2\int_{\Omega_0}\int^{(t+h) \wedge (\rho(z)-\frac{mz_1}{1-m^2}+h)}_{h \vee (\rho(y)-\frac{my_1}{1-m^2})}f(\|y-z\|)G(s,y,m)G(s,z,m)\;dsdydz
\end{split}\end{equation}
which is bounded by
\begin{equation}\begin{split}\label{z1356}
\int_{\Omega_0}f(\|y-z\|)\int^{\rho(z)-\frac{mz_1}{1-m^2}+h}_{\rho(y)-\frac{my_1}{1-m^2}}G(s,y,m)G(s,z,m)\;dsdydz,
\end{split}\end{equation}
where $\Omega_0=\{(y,z)| 0<\rho(y)-\frac{my_1}{1-m^2}-h<\rho(z)-\frac{mz_1}{1-m^2}<\rho(y)-\frac{my_1}{1-m^2}<t+h\}$.

Suppose $y_1>z_1$. Let $\tau=s+\frac{mz_1}{1-m^2}$, then we have an upper bound of the inner integral of (\ref{z1356}),
\begin{equation*}\begin{split}
&\int^{\rho(z)-\frac{m(y_1-z_1)}{1-m^2}+h}_{\rho(y)}\left(\tau^2-\rho^2(y)\right)^{-1/2}\left(\tau^2-\rho^2(z)\right)^{-1/2}d\tau\\
&\leq\frac{1}{2\rho(y)}\ln\frac{\Big(\sqrt{(\rho(z)+\frac{m(y_1-z_1)}{1-m^2}+h)^2-\rho^2(y)}+\sqrt{(\rho(z)+\frac{m(y_1-z_1)}{1-m^2}+h)^2-\rho^2(y)}\Big)^2}{\rho^2(y)-\rho^2(z)}\\
&\leq\frac{1}{2\rho(y)}\ln\frac{(\sqrt{(\rho(y)+h)^2-\rho^2(y)}+\sqrt{(\rho(y)+h)^2-\rho^2(y)})^2}{\rho^2(y)-\rho^2(z)}\\
&\leq\frac{1}{2\rho(y)}\ln\frac{2(\rho(y)+h)^2-\rho^2(y)-\rho^2(z)+2\sqrt{(\rho(y)+h)^4-r^4_m(y)}}{\rho^2(y)-\rho^2(z)}\\
&\leq\frac{1}{2\rho(y)}\ln\Big(1+\frac{\frac{4t_0}{1-m}h+2h^2+2\sqrt{4\rho(y)h^3+6\rho^2(y)h^2+4r^3_m(y)h}}{\rho^2(y)-\rho^2(z)}\Big)\\
&\leq\frac{1}{2\rho(y)}\ln\Big(1+\frac{Ct_0 h^{\frac{1}{2}}}{\rho^2(y)-\rho^2(z)}\Big).
\end{split}\end{equation*}
The case $y_1<z_1$ is also derived in the following. The inner integral of (\ref{z1356}) is bounded by
\begin{equation*}\begin{split}
\frac{1_{\{\rho(z)<\rho(y)\}}}{2\rho(y)}\ln\Big(1+\frac{Ct_0 h^{\frac{1}{2}}}{\rho^2(y)-\rho^2(z)}\Big)+\frac{1_{\{\rho(z)>\rho(y)\}}}{2\rho(z)}\ln\Big(1+\frac{Ct_0 h^{\frac{1}{2}}}{\rho^2(z)-\rho^2(y)}\Big).
\end{split}\end{equation*}
By $\Omega_0\subset\{(y,z)|\;0<\rho(z)-\frac{mz_1}{1-m^2}<\rho(y)-\frac{my_1}{1-m^2}<t+h=\tilde{t}\;\}$ and Lemma \ref{lem1}, $E[|Y_2|^2]$ is bounded by (\ref{boundeq1}).

Next we consider $E[|Y_3|^2]$. Let $s=t+h-\tilde{s}$. Then $E[|Y_3|^2]$ is
\begin{equation}\begin{split}\label{y3}
C\int_{0<\rho(z)-\frac{mz_1}{1-m^2}<\rho(y)-\frac{my_1}{1-m^2}<h}f(\|y-z\|)\int^{h}_{\rho(y)-\frac{my_1}{1-m^2}}\frac{1}{\sqrt{(s+\frac{my_1}{1-m^2})^2-\rho^2(y)}}\frac{1}{\sqrt{(s+\frac{mz_1}{1-m^2})^2-\rho^2(z)}}\;dsdydz.
\end{split}\end{equation}
If $h$ is replaced by $t$, then (\ref{y3}) is the same as the right-hand side of (\ref{ghighi}) in the proof of Theorem \ref{th1}. Therefore, we have upper bounds of the inner integral of (\ref{y3})
\begin{equation*}\begin{split}
&\int^{h+\frac{my_1}{1-m^2}}_{\rho(y)}\left(s^2-\rho^2(y)\right)^{-1/2}\left(s^2-\rho^2(z)\right)^{-1/2}ds,\;\;\;\;if\;\{(y,z)|\;z_1<y_1\}\;\text{or}\; \{(y,z)|\;z_1>y_1, \rho(y)>\rho(z)\},\\
&\int^{h+\frac{my_1}{1-m^2}}_{\rho(z)}\left(s^2-\rho^2(y)\right)^{-1/2}\left(s^2-\rho^2(z)\right)^{-1/2}ds,\;\;\;\;if\;\{(y,z)|\;z_1>y_1, \rho(y)<\rho(z)\}.
\end{split}\end{equation*}
If $\{(y,z)|\;z_1<y_1\}$ or $\{(y,z)|\;z_1>y_1, \rho(y)>\rho(z)\}$, we have
\begin{equation*}\begin{split}
&\int^{h+\frac{my_1}{1-m^2}}_{\rho(y)}\left(s^2-\rho^2(y)\right)^{-1/2}\left(s^2-\rho^2(z)\right)^{-1/2}ds\\
&\leq\frac{1}{2\rho(y)}\ln\frac{\left(\sqrt{(h+\frac{my_1}{1-m^2})^2-\rho^2(y)}+\sqrt{(h+\frac{my_1}{1-m^2})^2-\rho^2(z)}\right)^2}{\rho^2(y)-\rho^2(z)}\\
&\leq\frac{1}{2\rho(y)}\ln\frac{\left(\sqrt{(\frac{h}{1-m})^2-\rho^2(y)}+\sqrt{(\frac{h}{1-m})^2-\rho^2(z)}\right)^2}{\rho^2(y)-\rho^2(z)}\\
&\leq\frac{1}{2\rho(y)}\ln\frac{4(\frac{h}{1-m})^2-2(\rho^2(y)+\rho^2(z))}{\rho^2(y)-\rho^2(z)}\\
&\leq\frac{1}{2\rho(y)}\ln\frac{4(\frac{h}{1-m})^2+|\rho^2(y)-\rho^2(z)|}{\rho^2(y)-\rho^2(z)}\\
&\leq\frac{1}{2\rho(y)}\ln\Big(1+\frac{4(\frac{h}{1-m})^2}{\rho^2(y)-\rho^2(z)}\Big).
\end{split}\end{equation*}
Similarly, we obtain an upper bound for $\{(y,z)|\;z_1>y_1, \rho(y)<\rho(z)\}$,
\begin{equation*}\begin{split}
\frac{1}{2\rho(z)}\ln\Big(1+\frac{4(\frac{h}{1-m})^2}{\rho^2(z)-\rho^2(z)}\Big).
\end{split}\end{equation*}
Hence, Lemma \ref{lem1} leads to the proof.

Finally, we will show that $E[|X(t,x^{(1)})-X(t,x^{(2)})|^2]$ is bounded by (\ref{boundeq1}). Let $x:=x^{(2)}-x^{(1)}$, $\|x\|=h$. Likewise the time increment case,
\begin{equation*}\begin{split}
X&(t,x^{(1)})-X(t,x^{(2)})\\
=&\int^{t}_{0}\int_{\mathds{R}^2}\left(G(t-s,x^{(1)}-y,m)-G(t-s,x^{(2)}-y,m)\right)F(ds,dy)\\
=&\int^{t}_{0}\mathop{\int_{0<\rho(y)-\frac{my_1}{1-m^2}<s}}_{0<r_m(x+y)-\frac{m(x_1+y_1)}{1-m^2}<s}G(s,y,m)-G(s,x+y,m)F(ds,dy)\\
&+\int^{t}_{0}\mathop{\int_{0<\rho(y)-\frac{my_1}{1-m^2}<s}}_{r_m(x+y)-\frac{m(x_1+y_1)}{1-m^2}>s}G(s,y,m)F(ds,dy)\\
&-\int^{t}_{0}\mathop{\int_{\rho(y)-\frac{my_1}{1-m^2}>s}}_{0<r_m(x+y)-\frac{m(x_1+y_1)}{1-m^2}<s}G(s,x+y,m)F(ds,dy)\\
=:&Z_1+Z_2+Z_3.
\end{split}\end{equation*}
It holds that $E[|Z_{i}|^2]$, $i=2,3$ are bounded by the same upper bound, due to $E[|Z_{2}|^2]=E[|Z_{3}|^2]$. We only derive the upper bound of $E[|Z_2|^2]$. Let $\bar{h}=\frac{h}{1-m}$. Then, by $\bar{h}\geq \rho(x)-\frac{mx_1}{1-m^2}$,
\begin{equation*}\begin{split}
E&[|Z_2|^2]\\
\leq&\int^{t}_{0}\mathop{\int_{0<s-\rho(x)+\frac{mx_1}{1-m^2}<\rho(y)-\frac{my_1}{1-m^2}<s}}_{0<s-\rho(x)+\frac{mx_1}{1-m^2}<\rho(z)-\frac{mz_1}{1-m^2}<s}f(\|y-z\|)G(s,y,m)G(s,z,m)\;dzdyds\\
\leq&\int^{\bar{h}}_{0}\mathop{\int_{0<s-\rho(x)+\frac{mx_1}{1-m^2}<\rho(y)-\frac{my_1}{1-m^2}<s}}_{0<s-\rho(x)+\frac{mx_1}{1-m^2}<\rho(z)-\frac{mz_1}{1-m^2}<s}f(\|y-z\|)G(s,y,m)G(s,z,m)\;dzdyds\\
&+\int^{t}_{\bar{h}}\mathop{\int_{0<s-\rho(x)+\frac{mx_1}{1-m^2}<\rho(y)-\frac{my_1}{1-m^2}<s}}_{0<s-\rho(x)+\frac{mx_1}{1-m^2}<\rho(z)-\frac{mz_1}{1-m^2}<s}f(\|y-z\|)G(s,y,m)G(s,z,m)\;dzdyds\\
\leq&\int^{\bar{h}}_{0}\mathop{\int_{0<\rho(y)-\frac{my_1}{1-m^2}<s}}_{0<\rho(z)-\frac{mz_1}{1-m^2}<s}f(\|y-z\|)G(s,y,m)G(s,z,m)\;dzdyds\\
&+\int^{t+\bar{h}}_{\bar{h}}\mathop{\int_{0<s-\tilde{h}<\rho(y)-\frac{my_1}{1-m^2}<s+\bar{h}}}_{0<s-\tilde{h}<\rho(z)-\frac{mz_1}{1-m^2}<s+\bar{h}}f(\|y-z\|)G(s,y,m)G(s,z,m)\;dzdyds\\
=:&Z_{2,1}+Z_{2,2}
\end{split}\end{equation*}
By replacing $\bar{h}$ with $h$, integrals $Z_{2,1}$ and $Z_{2,2}$ are same as $Y_3$ and $Y_2$ respectively. Similarly, (\ref{boundeq1}) is an upper bound of $E[|Z_{i}|^2]$, $i=2,3$.

By Fubini's theorem, $E[|Z_1|^2]$ is
\begin{equation*}\begin{split}
\int_{\Omega_1}\int_{\Omega_2}\int^{t}_{\tilde{\alpha}}f(\|y-z\|)\left(G(s,y,m)-G(s,x+y,m)\right)\left(G(s,z,m)-G(s,x+z,m)\right)dzdyds.
\end{split}\end{equation*}
where $\tilde{\alpha}=\max\{\rho(z)-\frac{mz_1}{1-m^2}, \rho(y)-\frac{my_1}{1-m^2}, \rho(x+z)-\frac{m(x_1+z_1)}{1-m^2}, \rho(x+y)-\frac{m(x_1+y_1)}{1-m^2}\}$ and
\begin{equation*}\begin{split}
\Omega_1=\{(y,z)|0<\rho(z)-\frac{mz_1}{1-m^2}<t, 0<r_m(x+z)-\frac{m(x_1+z_1)}{1-m^2}<t\},\\
\Omega_2=\{(y,z)|0<\rho(y)-\frac{my_1}{1-m^2}<t, 0<r_m(x+y)-\frac{m(x_1+y_1)}{1-m^2}<t\}.
\end{split}\end{equation*}
The domain $\Omega_1\times \Omega_2$ consists of four disjoint areas:
\begin{equation*}\begin{split}
\bar{D}_1(y,z)=\{(y,z)|\;0<\rho(x+y)-\frac{m(x_1+y_1)}{1-m^2}<\rho(y)-\frac{my_1}{1-m^2}<t, 0<\rho(x+z)-\frac{m(x_1+z_1)}{1-m^2}<\rho(z)-\frac{mz_1}{1-m^2}<t\},\\
\bar{D}_2(y,z)=\{(y,z)|\;0<\rho(y)-\frac{my_1}{1-m^2}<\rho(x+y)-\frac{m(x_1+y_1)}{1-m^2}<t, 0<\rho(z)-\frac{mz_1}{1-m^2}<\rho(x+z)-\frac{m(x_1+z_1)}{1-m^2}<t\},\\
\bar{D}_3(y,z)=\{(y,z)|\;0<\rho(y)-\frac{my_1}{1-m^2}<\rho(x+y)-\frac{m(x_1+y_1)}{1-m^2}<t, 0<\rho(x+z)-\frac{m(x_1+z_1)}{1-m^2}<\rho(z)-\frac{mz_1}{1-m^2}<t\},\\
\bar{D}_4(y,z)=\{(y,z)|\;0<\rho(x+y)-\frac{m(x_1+y_1)}{1-m^2}<\rho(y)-\frac{my_1}{1-m^2}<t, 0<\rho(z)-\frac{mz_1}{1-m^2}<\rho(x+z)-\frac{m(x_1+z_1)}{1-m^2}<t\}.
\end{split}\end{equation*}
By similar arguments in Dalang~\cite{DF1998} (refer to pages 209-210 of \cite{DF1998}), it is sufficient to check the integral over $\bar{D}_1$. By a symmetric property of $\bar{D}_1(y,z)$, we obtain
\begin{equation*}\begin{split}
&\int_{\bar{D}_1(y,z)}f(\|y-z\|)\int^{t}_{\rho(y)-\frac{my_1}{1-m^2}\vee \rho(z)-\frac{mz_1}{1-m^2}}\left(G(s,y,m)-G(s,x+y,m)\right)\left(G(s,z,m)-G(s,x+z,m)\right)\;dzdyds\\
&\leq2\int_{\cup^{3}_{i=1}D_i(y,z)\cap D_4(y)}f(\|y-z\|)\int^{t}_{\rho(y)-\frac{my_1}{1-m^2}}(G(s,y,m)-G(s,x+y,m))(G(s,z,m)-G(s,x+z,m))\;dsdydz,
\end{split}\end{equation*}
where $D_4(y)=\{y|\;0<\rho(x+y)-\frac{m(x_1+y_1)}{1-m^2}<\rho(y)-\frac{my_1}{1-m^2}\}$.

Suppose $x_1\geq0$. Since $x=x^{(2)}-x^{(1)}$, an upper bound of the case $x_1<0$ is the same as of $\tilde{x}:=x^{(1)}-x^{(2)}$ and $\tilde{x}_1>0$. Therefore, the case $x_1\geq0$ is enough to obtain an upper bound of $E[|Z_1|^2]$. Let $\hat{h}=\frac{h}{1-m^2}$ and $\tilde{h}=\frac{mx_1}{1-m^2}$. As in the case of $Y_1$, we have an upper bound of $E[|Z_1|^2]$,
\begin{equation*}\begin{split}
\int_{D_1(y,z)\cap D_4(y)}f(\|y-z\|)\int^{t-\frac{my_1}{1-m^2}}_{\rho(y)} \frac{1}{\sqrt{s^2-\rho^2(z)}}\left(\frac{1}{\sqrt{s^2-\rho^2(y)}}-\frac{1}{\sqrt{(s+\tilde{h})^2-\rho^2(x+y)}}\right)dsdydz\\
+\int_{D_2(y,z)\cap D_4(y)}f(\|y-z\|)\int^{t-\frac{my_1}{1-m^2}}_{\rho(y)} \frac{1}{\sqrt{s^2-\rho^2(z)}}\left(\frac{1}{\sqrt{s^2-\rho^2(y)}}-\frac{1}{\sqrt{(s+\tilde{h})^2-\rho^2(x+y)}}\right)dsdydz\\
+\int_{D_3(y,z)\cap D_4(y)}f(\|y-z\|)\int^{t-\frac{mz_1}{1-m^2}}_{\rho(z)} \frac{1}{\sqrt{s^2-\rho^2(z)}}\left(\frac{1}{\sqrt{s^2-\rho^2(y)}}-\frac{1}{\sqrt{(s+\tilde{h})^2-\rho^2(x+y)}}\right)dsdydz.
\end{split}\end{equation*}
For $x_1\geq0$, it holds that $(s+\tilde{h})^2-\rho^2(x+y)=s^2-(\rho^2(x+y)-\tilde{h}^2-2s\tilde{h})>s^2-\rho^2(y)$ in $D_4(y)$. By Lemma 4 in \cite{DF1998} and Lemma \ref{lem4}, we have an upper bound,
\begin{equation*}\begin{split}
&\int_{D_1(y,z)}\frac{f(\|y-z\|)}{2\rho(y)}\ln{\Big(1+\frac{\frac{4t_0}{1-m}\hat{h}+\hat{h}^2}{\rho^2(y)-\rho^2(z)}+2\sqrt{\frac{\frac{4t_0}{1-m}\hat{h}+\hat{h}^2}{\rho^2(y)-\rho^2(z)}}\Big)}\;dzdy\\
&+\int_{D_2(y,z)}\frac{f(\|y-z\|)}{2\rho(y)}\ln{\Big(1+\frac{\frac{4t_0}{1-m}\hat{h}+\hat{h}^2}{\rho^2(y)-\rho^2(z)}+2\sqrt{\frac{\frac{4t_0}{1-m}\hat{h}+\hat{h}^2}{\rho^2(y)-\rho^2(z)}}\Big)}\;dzdy\\
&+\int_{D_3(y,z)}\frac{f(\|y-z\|)}{2\rho(z)}\ln{\Big(1+\frac{\frac{4t_0}{1-m}\hat{h}+\hat{h}^2}{\rho^2(z)-\rho^2(y)}+2\sqrt{\frac{\frac{4t_0}{1-m}\hat{h}+\hat{h}^2}{\rho^2(z)-\rho^2(y)}}\Big)}\;dzdy.
\end{split}\end{equation*}
By Lemma \ref{lem1}, we finish the proof.
\end{proof}

{\bf Remark 2.} The integrand of the given integration (\ref{boundeq1}) is bounded by the $L^1$-integrable function $$rf(r)\left(\ln{C}-\ln{r}-\ln{(\frac{w\phi(\theta)}{1-m^2}-r\phi^2(\theta))}\right)\left(\ln(4(1+m)t_0)-\ln{w}\right).$$ By dominated convergence theorem, (\ref{boundeq1}) goes to $0$ as $h\rightarrow 0$. \\

\begin{thm}\label{th2}
Assume that there is $0<\alpha< 1$ such that
\begin{equation}
\int_{0^+}f(r)r^{1-\alpha}dr<\infty.
\end{equation}
Then $X$ is H\"older-continuous with $b$, $b\in(0,\frac{\alpha}{4})$.
\end{thm}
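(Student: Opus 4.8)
The plan is to combine the Gaussian character of $X$ with the mean square increment bounds of Lemma \ref{lem5} and to conclude via the Kolmogorov continuity theorem on the three–dimensional parameter space $(t,x_1,x_2)$. Since $X(t,x)=\int_{(0,t]\times\mathds{R}^2}G(t-s,x-y,m)\,dF(s,y)$ is a Wiener integral against the Gaussian noise $F$, every increment $X(t,x)-X(t',x')$ is a centred Gaussian random variable, so that for each integer $p\geq 1$ one has the identity $E[|X(t,x)-X(t',x')|^{2p}]=c_p\big(E[|X(t,x)-X(t',x')|^2]\big)^{p}$. Hence it suffices to control the second moment of the increments, which is precisely what Lemma \ref{lem5} provides through the estimate (\ref{boundeq1}), valid simultaneously for the time increment $E[|X(t,x)-X(t+h,x)|^2]$ and the space increment $E[|X(t,x^{(1)})-X(t,x^{(2)})|^2]$ with $\|x^{(1)}-x^{(2)}\|=h$.

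First I would extract a genuine power of $h$ from (\ref{boundeq1}). Using the elementary inequality $\ln(1+u)\leq C_\beta u^{\beta}$, valid for all $u\geq 0$ and any fixed $\beta\in(0,1)$, applied with $u=C_2t_0h^{1/2}\big/\big(\tfrac{wr\eta(\theta)}{1-m^2}-r^2\eta^2(\theta)\big)$, the right-hand side of (\ref{boundeq1}) is dominated by $C\,h^{\beta/2}\,I(\beta)$, where $I(\beta)$ is the triple integral obtained by replacing the logarithm $\ln(1+\cdots)$ in (\ref{boundeq1}) by $\big(\tfrac{wr\eta(\theta)}{1-m^2}-r^2\eta^2(\theta)\big)^{-\beta}$. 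The whole matter then reduces to establishing $I(\beta)<\infty$ for a suitable range of $\beta$.

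The key and hardest step is to show $I(\beta)<\infty$ whenever $\beta<\alpha$. Writing the singular factor as $\tfrac{wr\eta(\theta)}{1-m^2}-r^2\eta^2(\theta)=r\eta(\theta)\big(\tfrac{w}{1-m^2}-r\eta(\theta)\big)$ and recalling $\tfrac{1}{\sqrt{1-m^2}}\leq\eta(\theta)\leq\tfrac{1}{1-m^2}$, I would integrate (by Fubini's theorem) first in $w$. Because $\beta<1$, the factor $\big(\tfrac{w}{1-m^2}-r\eta(\theta)\big)^{-\beta}$ is integrable at its lower endpoint $w=(1-m^2)r\eta(\theta)$, and combined with the weight $\ln(4(1+m)t_0)-\ln w$ it contributes at most $C(1+|\ln r|)$, uniformly in $\theta$; the remaining factor $(r\eta(\theta))^{-\beta}$ integrates in $\theta$ to a constant multiple of $r^{-\beta}$. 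This leaves $I(\beta)\leq C\int_{0^{+}}f(r)\,r^{1-\beta}(1+|\ln r|)\,dr$. Since $r^{1-\beta}(1+|\ln r|)\leq C\,r^{1-\alpha}$ near $r=0$ for every $\beta<\alpha$, the hypothesis $\int_{0^{+}}f(r)r^{1-\alpha}\,dr<\infty$ yields $I(\beta)<\infty$. The interplay between the logarithmic weights and the boundary singularity $\big(\tfrac{w}{1-m^2}-r\eta(\theta)\big)^{-\beta}$ in this triple integral is where the real work lies.

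Combining the time and space regimes of Lemma \ref{lem5} through the triangle inequality then gives $E[|X(t,x)-X(t',x')|^2]\leq C\,\|(t,x)-(t',x')\|^{\beta/2}$ for every $\beta<\alpha$, and by the Gaussian identity above $E[|X(t,x)-X(t',x')|^{2p}]\leq C_p\,\|(t,x)-(t',x')\|^{p\beta/2}$. Applying the Kolmogorov continuity theorem on $\mathds{R}_{+}\times\mathds{R}^2$, whose parameter dimension is $3$, with moment exponent $q=2p$, one obtains a version of $X$ that is Hölder continuous of any order strictly less than $\tfrac{p\beta/2-3}{2p}=\tfrac{\beta}{4}-\tfrac{3}{2p}$, provided $p$ is large enough that $p\beta/2>3$. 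Letting $p\to\infty$ and $\beta\uparrow\alpha$, the attainable Hölder exponent tends to $\alpha/4$, which proves that $X$ is Hölder continuous of any order $b\in(0,\tfrac{\alpha}{4})$.
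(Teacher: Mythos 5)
Your proposal is correct and takes essentially the same route as the paper's own proof: both apply the inequality $\ln(1+x)\leq C x^{b}$ to the bound (\ref{boundeq1}) of Lemma \ref{lem5}, reduce the resulting triple integral to one of the form $\int_{0^{+}}f(r)r^{1-b}\left(1+|\ln r|\right)dr$ which is finite for $b<\alpha$ by comparison with $r^{1-\alpha}$, and then pass to higher moments via Gaussianity of the increments and invoke the Kolmogorov continuity theorem. Your explicit bookkeeping of the Kolmogorov exponent ($\frac{\beta}{4}-\frac{3}{2p}$, then $p\to\infty$ and $\beta\uparrow\alpha$) is in fact more careful than the paper's somewhat abbreviated final step, but it is the same argument.
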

\begin{proof}
We apply the same way as in Theorem 3 from \cite{DF1998} as follows. For each $b\in(0,1]$, there is a constant $C$ such that for all $x>0$, $\ln(1+x)\leq C x^{b}$. Therefore (\ref{boundeq1}) is less than or equal to
\begin{equation}\label{boundeq112}
C h^{b/2}\int^{2\sqrt{\frac{1+m}{1-m}}t_0}_{0}rf(r)\int^{2(1+m)t_0}_{r\sqrt{1-m^2}}\int^{\pi/2}_{\eta^{-1}\left(\frac{w}{(1-m^2)r}\right)}(\frac{wr\eta(\theta)}{1-m^2}-r^2\eta^2(\theta))^{-b}\Big(\ln{(4(1+m)t_0)}-\ln{w}\Big)\;d\theta dwdr.
\end{equation}
By replacing $\ln w$ with $\ln (r\sqrt{1-m^2})$ and using the boundness of $\eta$ in the proof of Lemma \ref{lem1}, the inner integral ($\int \cdot \;d\theta dw$) is less than or equal to
\begin{equation}\label{aaa}
C_1 r^{-b}(\ln{(4(1+m)t_0)}-\ln{(r\sqrt{1-m^2})})\frac{(\frac{2t_0}{1-m}-C_2 r)^{1-b}}{1-b}\leq C_1 r^{-b}(\ln{(4(1+m)t_0)}-\ln{(r\sqrt{1-m^2})})\frac{(\frac{2t_0}{1-m})^{1-b}}{1-b}
\end{equation}
for some $C_1,C_2>0$. From (\ref{aaa}), the integral (\ref{boundeq112}) is bounded by
\begin{equation}
C h^{b/2}\int^{2\sqrt{\frac{1+m}{1-m}}t_0}_{0}f(r)r^{1-b}(\ln{(4(1+m)t_0)}-\ln{(r\sqrt{1-m^2})})dr.
\end{equation}
For $b\in(0,\alpha)$, we get $\lim_{r\rightarrow 0}\frac{r^{1-b}(\ln{(4(1+m)t_0)}-\ln{(r\sqrt{1-m^2})})}{r^{1-\alpha}}=0$. This implies that (\ref{boundeq112}) is bounded by
\begin{equation}
C h^{\alpha/2}\int^{2\sqrt{\frac{1+m}{1-m}}t_0}_{0}f(r)r^{1-\alpha}dr\leq C h^{\alpha/2}.
\end{equation}
Lemma \ref{lem5} leads to
\begin{equation*}
E[|X(t,x)-X(t+h,x)|^2+|X(t,x^{(1)})-X(t,x^{(2)})|^2]\leq C h^{\alpha/2}.
\end{equation*}
Since the solution $X(t,x)$ is Gaussian process, we can easily obtain that $p$-moments, $E[|X(t,x)-X(t+h,x)|^p]$ and $E[|X(t,x^{(1)})-X(t,x^{(2)})|^p]$ are bounded by $C h^{p\alpha/4}$ (refer to \cite{DF1998}). Therefore, Komogorov continuity theorem (refer to \cite{W1986}) implies that $X(t,x)$ is H\"older continuous.
\end{proof}

\section{Appendix}
In this section, we prove Lemma $\ref{lem0}$ and Lemma $\ref{lem4}$.\\
{\bf Proof of Lemma $\ref{lem0}$} :
Let $t=s+\epsilon$ . Then we have
\begin{equation*}
\int^{b}_{a-\epsilon}(s^2-c^2)^{-1/2}((s+\epsilon)^2-a^2)^{-1/2}ds=\int^{b+\epsilon}_{a}((t-\epsilon)^2-c^2)^{-1/2}(t^2-a^2)^{-1/2}dt.
\end{equation*}
For $b+\epsilon>t>a$,
\begin{equation*}\begin{split}
(t-\epsilon)^2-c^2=&(t+c+\epsilon -2\epsilon)(t-c-\epsilon)\\
&=\left(1-2\frac{\epsilon}{t+c+\epsilon}\right)(t^2-(c+\epsilon)^2).
\end{split}\end{equation*}
On the other hand, $t+c+\epsilon\geq 4\epsilon$ if $c\geq\epsilon$ and $t+c+\epsilon> 2\epsilon+2c$ if $c<\epsilon$. This implies
\begin{equation*}\begin{split}
(t-\epsilon)^2-c^2 &\geq \min\{\frac{1}{2}, \frac{c}{c+\epsilon}\}(t^2-(c+\epsilon)^2)\\
&\geq \min\{\frac{1}{2}, \frac{c}{a}\}(t^2-(c+\epsilon)^2).
\end{split}\end{equation*}
Therefore, we conclude that
\begin{equation*}\begin{split}
\left((t-\epsilon)^2-c^2\right)^{-1/2}\leq C\left(t^2-(c+\epsilon)^2\right)^{-1/2}= C\left(t^2-\tilde{c}^2\right)^{-1/2}.
\end{split}\end{equation*}
{\bf Proof of Lemma $\ref{lem4}$} :
Let $\tilde{s}=s^2$. By $\int(s^2+a_1s+a_2)^{-1/2}ds=\ln(a_1+2s+2\sqrt{s^2+a_1s+a_2})$, the left-hand side integral of (\ref{lemin5}) is bounded by
\begin{equation*}\begin{split}
\frac{1}{2\sqrt{b}}&\int^{t^2}_{b}\left((\tilde{s}-b)^{-1/2}-(\tilde{s}-a)^{-1/2}\right)(\tilde{s}-c)^{-1/2}d\tilde{s}\\
&=\frac{1}{2\sqrt{b}}\left(\ln{\Big(\frac{\sqrt{t^2-b}+\sqrt{t^2-c}}{\sqrt{t^2-a}+\sqrt{t^2-c}}\Big)^2}-\ln{\frac{b-c}{\left(\sqrt{b-a}+\sqrt{b-c}\right)^2}}\right)\\
&\leq-\frac{1}{2\sqrt{b}}\ln{\frac{b-c}{\left(\sqrt{b-a}+\sqrt{b-c}\right)^2}}\\
&=\frac{1}{2\sqrt{b}}\ln{\Big(1+\frac{b-a}{b-c}+2\sqrt{\frac{b-a}{b-c}}\Big)}.
\end{split}\end{equation*}

\end{document}